\newcommand{\WL}[1]{{\color{black}{#1}}}
\crefname{hypothesis}{Hypothesis}{Hypotheses}
\author{Wenyu Lei\footnotemark[2] \thanks{School of Mathematical Sciences, University of Electronic Science and Technology of China, No.2006, Xiyuan Ave, West Hi-Tech Zone, 611731, Chengdu, China.}
  \and George Turkiyyah\thanks{King Abdullah University of Science and Technology (KAUST), Thuwal, Saudi Arabia (\email{wenyu.lei, george.turkiyyah, omar.knio@kaust.edu.sa}).}
  \and Omar Knio\footnotemark[2]}
\title{Finite element discretizations for \\ variable-order fractional diffusion problems}
\begin{document}

\maketitle

\begin{abstract}
  We present a finite element discretization scheme for multidimensional fractional diffusion problems with spatially varying diffusivity and fractional order. We consider the symmetric integral form of these nonlocal equations defined on general geometries and in arbitrary bounded domains.   A number of challenges are encountered when discretizing these equations. The first comes from the heterogeneous kernel singularity in the fractional integral operator. The second comes from the formally dense discrete operator with its quadratic growth in memory footprint and arithmetic operations. An additional challenge comes from the need to handle volume conditions--the generalization of classical local boundary conditions to the nonlocal setting. Satisfying these conditions requires that the effect of the whole domain, including both the interior and exterior regions, can be computed on every interior point in the discretization. Performed directly, this would result in quadratic complexity.  In order to address these challenges, we propose a strategy that decomposes the stiffness matrix into three components.  The first is a sparse matrix that handles the singular near-field separately, and is computed by adapting singular quadrature techniques available for the homogeneous case to the case of spatially variable order.
  The second component handles the remaining smooth part of the near-field as well as the far-field, and is approximated by a hierarchical  $\mathcal{H}^2$ matrix that maintains linear complexity in storage and operations.
  The third component handles the effect of the global mesh at every node, and is written as a weighted mass matrix whose
  density is computed by a fast-multipole type method.
  The resulting algorithm has therefore overall linear space and time complexity. Analysis of the consistency of the stiffness matrix is provided and numerical experiments are conducted to illustrate the convergence and performance of the proposed algorithm.
\end{abstract}

\begin{keywords}
  fractional diffusion, variable order, finite element approximation, hierarchical low-rank approximations, fast multiple method.
\end{keywords}

\begin{MSCcodes}
  65N22, % Numerical solution of discretized equations for boundary value problems involving PDEs
  65N30, % Finite element, Rayleigh-Ritz and Galerkin methods for boundary value problems involving PDEs
  65N38, % Boundary element methods for boundary value problems involving PDEs
  65N06  % Finite difference methods for boundary value problems involving PDEs
\end{MSCcodes}

% \makeatletter\@starttoc{toc}\makeatother

% \clearpage

%!TEX root = paper.tex

%%%%%%%%%%%%%%%%%%%%%%%%%%%%%%%%%%%%%%%%%%%%%%%%%%%%%%%%%%%%%%%%%%%%%%%%%%%%%%%%%%%
% Introduction
%%%%%%%%%%%%%%%%%%%%%%%%%%%%%%%%%%%%%%%%%%%%%%%%%%%%%%%%%%%%%%%%%%%%%%%%%%%%%%%%%%%
\section{Introduction}
\label{sec:intro}

Fractional diffusion equations are becoming increasingly important in modeling phenomena where nonlocal effects are significant, including fractured media in material science \cite{silling2000reformulation,suzuki2022fractional}, transport in complex media \cite{SONG2016,WANG2019}, stable L\'evy processes in finance \cite{tankov2003financial,levendorskii2004pricing}, Gaussian random fields in spatial statistics \cite{lindgren2011explicit,lindgren2022spde}, image denoising \cite{gatto2015numerical}, among many others. In addition to physics-based modeling, fractional operators  have also been used in controlling the smoothness of priors in Bayesian inverse problems involving distributed parameters \cite{chen17}. While there has been much work devoted to the formulation and discretization of fractional diffusion in the homogenous case with constant fractional order and material properties (diffusivity, permeability, etc.), there has been comparatively little work that addresses the practically useful heterogenous case with variable coefficients, particularly in multiple spatial dimensions. Our motivation in this work is to develop methods for the efficient discretization of the fractional diffusion operator with variable fractional order and properties.

A standard formulation of fractional diffusion involves the integral representation of the fractional Laplacian $(-\Delta)^s$ of order $s\in(0,1)$, where the fractional order characterizes the global smoothness of the solution \cite{grubb2015fractional,borthagaray2022besov,faustmann2022weighted}.
The model may be extended to the variable order and diffusivity case by defining these quantities as functions in a physical domain; see e.g. \cite{contreras2016multi,d2022fractional,glusa2022asymptotically,suzuki2022fractional}. The specific form of the variable-order fractional diffusion operator we consider here is defined in a bounded domain $\Omega\subset \mathbb R^d$ with $d=1,2,3$ as:
\begin{equation}\label{e:op}
    \mathcal L u(x):= \text{p.v.}\int_\Omega  \frac{a(x,y) \left( u(x) - u(y) \right)}{|x-y|^{d+s(x)+s(y)}}\diff y ,
\end{equation}
where $a(x,y) : \Omega\times \Omega\to \mathbb R^+$ denotes the diffusion coefficient and $s(x): \mathbb R^d\to (0,1)$ is the variable-order function. Formal settings for the above operator with appropriate volume constraints to insure well-posedness are described in Section~\ref{sec:formulation} below.

The operator \eqref{e:op} becomes the classical integral fractional Laplacian operator (or the Riesz potential) when $s(x)$ and $a(x,y)$ are constant functions and $\Omega=\mathbb R^d$.
The numerical approximation and analysis of this integral fractional Laplacian have been extensively studied in the literature. We refer to \cite{huang2014numerical,duo2018novel,hao2021fractional,minden2020simple,duo2019accurate} for finite difference methods, \cite{XU2020} for a collocation approach,
and  \cite{boukaram2020hierarchical,luchko2016new,zhang2007numerical,gorenflo2007continuous} for particle methods as well as random walk approaches. In terms of finite element methods, we refer to \cite{acosta2017fractional,ainsworth2017aspects} for a classical conforming scheme, \cite{d2013fractional} based on volume constraints, \cite{bonito2019numerical} for a non-conforming approach by the Dunford-Taylor integral formulation, \cite{WANG2015,LIAN2016} for Petrov-Galerkin approaches, and \cite{karkulik2019h} based on the well-known Caffarelli-Silvestre extension (cf. \cite{caffarelli2007extension}).

In contrast, the numerical study for the variable-order fractional operator \eqref{e:op} has just started in recent years. For one dimensional problems, we refer to \cite{ZHAO2019,zheng2020optimal,jia2021fast,alzahrani21} for the numerical approximations of different formulations of variable-order fractional diffusion operators. In multiple spatial dimensions, finite difference schemes were presented in \cite{alzahrani2022space} for discretizing the operator \eqref{e:op} in Cartesian geometries. A finite element approach is presented in \cite{d2022fractional} for domains where the variability consists of uniform inclusions in an otherwise homogeneous medium. There is still however no complete treatment of general variable order and properties for arbitrary geometries in the multidimensional setting.
In this work, we extend the considerations from the finite-difference method on cartesian grids developed in \cite{alzahrani2022space} to the quasi-uniform meshes for general polytopes, and approximate variable-order fractional diffusion problems involving \eqref{e:op} using a linear finite element discretization scheme.

A primary challenge in the finite element discretization of \eqref{e:op} is that the non-local fractional kernel results in a dense stiffness matrix with a prohibitive $O(N^2)$ quadratic growth in memory footprint and arithmetic operations, where $N$ denotes the number of degrees of freedom. In \cite{ainsworth2017aspects}, the stiffness matrix is decomposed into the near-field and the far-field based on  interactions between element pairs and an $\mathcal H^2$ approximation is constructed from a kernel interpolation \cite{hackbusch2002h2}. The corresponding near-field matrix is sparse while the far-field matrix can be compressed as a hierarchically low-rank matrix, thus reducing the complexity of the assembly process to $O(N)$, with constants depending on the order of the quadrature and polynomial approximation of the kernel. We also refer to \cite{ZHAO2017,karkulik2019h,boukaram2020hierarchical,Li2020,bauer2022kernel} for related hierarchical low-rank approximations, also all in the context of constant-order fractional problems.
The variable coefficient case, however, requires additional considerations that are not present in the constant coefficient setting. For example, the work in \cite{ainsworth2017aspects,ainsworth2018towards} takes the advantage of the constant-order property to evaluate the
integral fractional Laplacian locally; see \cite[Lemma~4]{ainsworth2017aspects}, a procedure that cannot be readily extended to the variable-order case.

In our approach, the non-singular far-field matrix is expressed as the sum of two terms, where the first is a matrix that fits the structure for the $\mathcal H^2$ approximation,
\begin{equation}\label{e:far}
    \int_\Omega\int_\Omega V(x) \gamma_{\mathcal T}(x,y) U(y) \diff y\diff x,
\end{equation}
where $U$ and $V$ are finite element functions and $\gamma_{\mathcal T}$ is a de-singularized kernel depending on the subdivision $\mathcal T$ of $\Omega$. The second term is a sparse \emph{weighted mass matrix} with the density function $\rho_{\mathcal T}(x) = \int_\Omega\gamma_{\mathcal T}(x,y)\diff y$. We compute this weighted mass matrix by quadrature on each element. The bottleneck of this procedure is that a direct evaluation of the density function at each quadrature point is $O(N)$, resulting in a quadratic overall complexity.  To remedy this growth, we propose a fast multipole method to obtain the values at all the quadrature points in linear time complexity.

Another challenge of \eqref{e:op} is that the singularity of the kernel requires special quadrature rules in order not to slow down the rate of convergence as the discretization is refined. To address this challenge, we adapt techniques from boundary element computations (see e.g. \cite{sauter2010boundary}) to resolve the singularities of the integrand when computing the entries in element stiffness matrices.  Thus, we extend the techniques developed for the constant-order problem \cite{acosta2017short,acosta2017fractional} and achieve a treatment similar to that in \cite{alzahrani2022space} which
relied on a singularity subtraction technique that is particularly convenient to express on a cartesian finite difference grid.

The main contributions of this work are:
\begin{itemize}
    \item We propose a decomposition of the stiffness matrix for the finite element approximation of \eqref{e:op} into three sub-matrices. All three sub-matrices can be computed in $O(N)$, with construction criteria depending on the distance between shape function pairs and element pairs in the subdivision. This strategy allows the handling of general domains with spatially-variable order and coefficients as well bounded exterior regions.

    \item We generalize the variable-order setting discussed in \cite{d2022fractional} from element-wise constant functions to element-wise analytic functions. The numerical integration techniques for singular integral kernels developed by \cite{acosta2017short,acosta2017fractional} are generalized as well. The quadrature error is shown to converge at rates controllable simply by the order of the quadrature used. We also note that the proposed computation techniques can be applied to more general nonlocal kernels introduced by \cite{du2012analysis}.

    \item We propose an $\mathcal H^2$ representation to approximate the form \eqref{e:far} and a variable-kernel fast multipole method for  computing the density $\rho_{\mathcal T}(x)$ at quadrature points. The variable-kernel fast multipole computation is cast as a matrix vector product, where the matrix is constructed as an $\mathcal H^2$ matrix via a collocation method and the vector consists of quadrature weights. The errors in these approximations are shown to converge at rates readily controllable by the order of the polynomial used in the kernel approximation.
\end{itemize}

The rest of this paper is organized as follows.
Section~\ref{sec:formulation} presents a weak formulation of the problem and its finite-element discretization.
Section~\ref{sec:decomposition} describes the decomposition of the stiffness matrix into three sub-matrices that handle different aspects of the problem.
\Cref{s:direct,s:hmatrix,s:fmm} discuss the numerical approximation of these individual sub-matrices and analyze the resulting discretization errors.
Specifically, Section~\ref{s:direct} introduces a tensor-product quadrature rule to directly compute the singular near-field interactions.
Section~\ref{s:hmatrix} presents an ${\cal H}^2$ representation of the sub-matrix that approximates the non-singular part of the near-field as well as the far-field.
Section~\ref{s:fmm} describes the fast multipole machinery to efficiently account for global interactions at quadrature points.
Section~\ref{s:numerics} presents numerical experiments to illustrate the convergence of the finite element approximation as well as the linear-complexity of the proposed assembly process.
Section~\ref{sec:conclusions} concludes with directions for future work.

\emph{Notation.} In the following, we set $a\lesssim b$ if $a\le Cb$ with $C$ denoting a positive constant independent of $a$, $b$, and the discretization parameters (e.g. the mesh size $h$, the number of degrees of freedom $N$, the quadrature order $n$, and the polynomial degree $p$). We set $a\sim b$ when $a\lesssim b$ and $b\lesssim a$.

%!TEX root = paper.tex

\section{Weak formulation and finite element discretization}
\label{sec:formulation}
Our weak formulation for the nonlocal symmetric operator \eqref{e:op} starts from the definition of a generalized nonlocal divergence operator, $\mathcal D$, introduced in \cite{du2013nonlocal}. For a vector field $v: \mathbb R^d\times \mathbb R^d\to \mathbb R$,
\[
    \mathcal D[v](x) = \int_{\mathbb R^d} (v(x ,y)+ v(y, x)) \cdot \alpha(x , y)\diff y .
\]
Here the vector field $\alpha(x,y) : \mathbb R^d \times \mathbb R^d \to \mathbb R^d$ satisfies the antisymmetric property $\alpha(x,y) = -\alpha(y,x)$. An adjoint operator $\mathcal D^*$ corresponding to $\mathcal D$ under the $L^2(\mathbb R^d)$ inner product may be written as
\[
    \mathcal D^*[u] (x,y) := - (u(y) - u(x)) \alpha(x, y),
\]
with $-\mathcal D^*$ interpreted as a nonlocal gradient.

Similarly to the classical diffusion operator which is defined with a second-order diffusion coefficient $a$ (a symmetric tensor representing diffusivity, permeability, or related material properties), the nonlocal diffusion operator can be defined as
\begin{equation}\label{e:nl-diff}
    \mathcal L u:= \mathcal D[a\cdot \mathcal D^*[u]](x) := -2 \int_{\mathbb R^d} (u(y)-u(x))\gamma (x,y) \diff y
\end{equation}
with
\[
    \gamma := \alpha(x,y)^T a(x,y) \alpha(x,y) .
\]
In order to simplify our discussion, we set $a$ to be diagonal, and assume that there exists an analytic function $\kappa(x)$ so that $\kappa(x)\ge \delta >0$ for some positive constant $\delta$, and that $a(x,y) = \sqrt{\kappa(x)\kappa(y)} I$ with $I\in \mathbb R^{d\times d}$ denoting the identity matrix
\footnote{Even though we consider globally defined kernels here, our treatment also applies to finite horizon problems where $a$ vanishes when $|x-y|$ exceeds an interaction distance threshold.}.
% \footnote{The kernel $\gamma(x,y)$ we considered here follows the diffusion from the previous work. The proposed numerical approach can be also applied to problems with a finite horizon, namely Condition~(4.9) in \cite{du2012analysis}.}.
We are interested in the numerical approximation of the diffusion operator $\mathcal L$ with $\alpha$ defined by
\[
    \alpha(x,y) = \frac{y-x}{|y-x|^{\tfrac{d+s(x)+s(y)}2 + 1}} ,
\]
where $s(x)$ is an analytic function satisfying that
\begin{equation}\label{i:s}
    0\le \underline s\le s(x)\le \overline s < 1 .
\end{equation}
Plugging the above definition of $\alpha$ into \eqref{e:nl-diff} yields
\[
    \mathcal Lu(x) = -2 \int_{\mathbb R^d} \frac{a(x,y)(u(x)-u(y))}{|x-y|^{d+s(x)+s(y)}} \diff y,
\]
a spatially variable-order generalization of the integral fractional Laplacian.

\subsection{Volume-constrained problems}
We shall apply the operator $\mathcal L$ in a bounded domain $\Omega:=\overline\Oi\cup \Oe\subset \mathbb R^d$ with $\Oi\cap\Oe = \emptyset$. Here the domain $\Oi$ has Lipschitz boundary and $\Oe$ is an enclosing region of $\Oi$ satisfying that $0< |\Oe|<\infty$.
% \footnote{One can also consider the case when $\Omega = \Oi$ (the regional fractional Laplacian \cite{fall2022regional} when $s\in (\tfrac12,1)$ is constant). Though our numerical approach can be applied to such case, the discussion of this problem is beyond the scope of this paper.}.
Unlike classical, second-order elliptic problems, where imposing boundary conditions on $\partial \Oi$ is sufficient to guarantee well-posedness, a general non-local fractional operator requires that volume constraints be imposed on $\Oe$ \cite{du2013nonlocal}
\footnote{We limit our discussion to $|\Oe| > 0$ as this guarantees well-posedness of the Dirichlet problem. If $\Omega = \Oi$, then the problem may not be well-posed when $\underline{s} \le 1/2$ (cf. \cite{fall2022regional}).}.
These conditions are in a sense the equivalent of the boundary conditions of the local operator. Here we focus on the Dirichlet volume-constrained problem which can be stated as: given a data function $f$ supported in $\Oi$, we seek $u \in \Omega$ satisfying
\begin{equation}\label{e:strong}
    \begin{aligned}
        - \mathcal L_\Omega u & = f, &  & \text{ in } \Oi,  \\
        u                     & =0 , &  & \text{ in } \Oe ,
    \end{aligned}
\end{equation}
with the operator
\[
    \mathcal L_\Omega u =  -2 \int_{\Omega} \frac{a(x,y)(u(x)-u(y))}{|x-y|^{d+s(x)+s(y)}} \diff y .
\]
\subsection{Weak formulation}
Define the energy space
\[
    \mathbb V := \{v\in L^2(\Omega) : \|v\|_{\mathbb V}<\infty \text{ and } v = 0 \text{ in } \Oe\},
\]
where the energy norm $\|.\|_{\mathbb V}$ is given by $\|v\|_{\mathbb V}^2 := \|v\|_{L^2(\Omega)}^2 + |v|_{\mathbb V}^2$ with
\[
    |v|_{\mathbb V}^2 := \int_{\Omega}\int_{\Omega}
    (u(x) - u(y))^2\gamma(x,y) \diff y\diff x \quad\text{and}\quad
    \gamma(x,y) = \frac{a(x,y)}{|x-y|^{d+s(x)+s(y)}} .
\]
Clearly, the energy space $\mathbb V$ is a Hilbert space. By utilizing a standard argument (e.g., Proposition~2.2 and 2.4 in \cite{acosta2017fractional}), we can show the following Poincar\'e inequality
% \footnote{Here we have used the assumption that $|\Oe|$ is finite so that
% \[
%     \int_{\Oe}\frac{1}{|x-y|^{n+s(x)+s(y)}}\diff y \ge C .
% \]
% see the proof of \cite[Proposition~2.4]{acosta2017fractional}.}
\begin{equation}\label{i:poincare}
    \|v\|_{L^2(\Omega)} \lesssim |v|_{\mathbb V}, \quad\text{for } v\in \mathbb V .
\end{equation}
Multiplying the first equation of \eqref{e:strong} with a test function $v\in \mathbb V$, and integrating the resulting equation over $\Omega$, we have
\[
    \begin{aligned}
        \int_\Omega L_{\Omega} u v\diff x & = -2\int_{\Omega}\int_{\Omega}(u(x)-u(y))\gamma(x,y) v(x) \diff y \diff x     \\
                                          & = -2\int_{\Omega}\int_{\Omega} (u(y)-u(x))\gamma(x,y) v(y)  \diff y \diff x . \\
    \end{aligned}
\]
To obtain the second equality we switched the order of the double integral and used the fact that $\gamma(x,y) = \gamma(y,x)$. Summing up the two double integrals on the right hand side leads to the definition of the bilinear form
\[
    \mathcal A(u,v) := -\int_{\Omega} L_\Omega u v\diff x
    =\int_{\Omega}\int_\Omega (u(x)-u(y))(v(x)-v(y))\gamma(x,y) \diff y\diff x .
\]
So our weak formulation reads: find $u\in\mathbb V$ satisfying that
\begin{equation}\label{e:weak}
    \mathcal A(u,v) = \int_{\Oi} f v\diff x \quad \forall v\in \mathbb V .
\end{equation}
Clearly, $\mathcal A(u,v)\le \|u\|_{\mathbb V}\|v\|_{\mathbb V}$ by the Cauchy-Schwarz inequality. So $\mathcal A(.,.)$ is bounded in $\mathbb V$. Thanks to \eqref{i:poincare}, we obtain the coercivity of the bilinear form. Thus, the Lax-Milgram lemma guarantees that the weak formulation \eqref{e:weak-d} admits a unique solution $u\in\mathbb V$.

\subsection{Finite element discretization}
We consider a simplicial finite element mesh that subdivides the interior and exterior polytope regions $\Oi$ and $\Oe$. We denote by $\mathcal T:=\mathcal T(\Omega)$ the resulting subdivision of $\Omega$, and assume that $\mathcal T(\Omega)$ matches the boundary of $\Oi$, \ie $\overline\Oi\cap\overline\Oe$ consists of faces from $\mathcal T$, allowing us to identify
$\mathcal{T}^\text{int}$ and $\mathcal{T}^\text{ext}$ as the subdivisions of the interior and exterior regions.
Let $\mathcal N := \{\bx_i\}_{i=1}^M$ be the set of vertices associated with the subdivision $\mathcal T$ . In particular, we set the first $N$ ($N<M$) nodes to be the interior nodes in $\Oi$ and denote the collection of them to be $\mathcal N^\circ$. Also, we let $\mathcal N^c:=\mathcal N\backslash \mathcal N^\circ$.
Let $\mathbb V(\mathcal T)\subset \mathbb V$ be the conforming continuous piecewise linear finite element space associated with $\mathcal T$.
For each node $\bx_i\in \mathcal N$, we set $\psi_i \in \mathbb V(\mathcal T)$ to be the corresponding linear shape function. The above notations allow us to write the discrete solution as $U= \sum_{j=1}^N u_j \psi_j$ with $\underline U = (u_1,u_2,\ldots, u_N)^T\in \mathbb R^N$ and express the test function as $V = \sum_{i=1}^N v_i\psi_i$ with the coefficient vector $\underline V = (v_1,v_2,\ldots, v_N)^T\in \mathbb R^N$.
We also denote the support of $\psi_i$ by $\mathcal S_i$, and the patch of each cell $\tau \in \mathcal T$ by $\mathcal S_\tau$, namely (see Figure~\ref{fig:patch})
\[
    \mathcal S_i := \bigcup_{\tau \in {\mathcal T} | \bx_i\in \tau} \tau, \qquad
    \mathcal S_\tau := \bigcup_{ \substack{\tau' \in \mathcal T \\ \tau\cap \tau'\neq \emptyset} } \tau' \, .
\]
\begin{figure}[ht]
    % \vspace*{-0pt}
    \begin{center}
        \includegraphics[width=.22\textwidth]{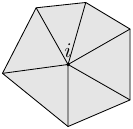}
        \hspace*{1.5cm}
        \includegraphics[width=.25\textwidth]{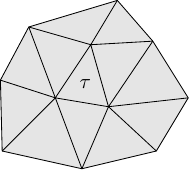}
    \end{center}
    \caption{Support $\mathcal S_i$ of basis function $\psi_i$ and patch $\mathcal S_\tau$ of cell $\tau$.}
    \label{fig:patch}
\end{figure}

A finite element discretization with respect to \eqref{e:weak} seeks $U\in \mathbb V(\mathcal T)$ so that $U = 0$ in $\Oe$ and
\begin{equation}\label{e:weak-d}
    \mathcal A(U,V) = \int_{\Oi} f V\diff x \quad \forall V\in \mathbb V(\mathcal T) .
\end{equation}

Because the above numerical scheme is conforming, namely $\mathbb V(\mathcal T)\subset \mathbb V$, the well-posedness follows from the continuous formulation \eqref{e:weak}.
Using the finite element mesh, we can rewrite the discrete bilinear form in \eqref{e:weak-d} as
\[
    \mathcal A(U,V) = \sum_{\tau,\tau'\in\mathcal T} \mathcal A_{\tau,\tau'}(U,V),
\]
where
\begin{equation}\label{e:l-stiff}
    \mathcal A_{\tau,\tau'}(U,V) := \int_\tau\int_{\tau'} (U(x)-U(y))(V(x)-V(y))\gamma(x,y)\diff y\diff x .
\end{equation}
The stiffness matrix assembly, \ie the construction of the matrix $\underline A \in \mathbb R^{N\times N}$ with entries $\underline A_{ij} = \mathcal A(\psi_j,\psi_i)$, is based on contributions of element pairs ($\tau, \tau'$) from \eqref{e:l-stiff}, with each such contribution involving interactions \eqref{e:l-stiff} between shape functions that are supported on either $\tau$ or $\tau'$. In other words, if we let $\mathcal I(\tau,\tau')\subset\{1,\ldots,N\}$ be the union of the global index sets of the vertices of elements $\tau$ and $\tau'$ so that for $i \in \mathcal I(\tau,\tau')$, we have $\tau \subset \mathcal S_i$ or $\tau' \subset \mathcal S_i$. Then we can define the local stiffness matrix of an element pair as
\begin{equation} \label{eq:elem}
    \underline A^{\tau,\tau'} = \{\mathcal A_{\tau,\tau'}(\psi_j,\psi_i)\}_{i,j\in \mathcal I(\tau,\tau')}
\end{equation}
and assemble these elemental contributions into a global stiffness matrix $\underline A$.

%!TEX root = paper.tex

\section{Decomposition of the stiffness matrix}
\label{sec:decomposition}

A direct assembly of the elemental stiffness matrices $\underline A^{\tau,\tau'}$  of \eqref{eq:elem} into a global stiffness matrix $\underline A$ will obviously result in a scheme with quadratic complexity in both storage and operations. In this section, we outline a splitting scheme that decomposes the stiffness matrix into three components, in order to obtain a linear complexity algorithm. The splitting reflects distinct computational characteristics of the problem, and the resulting decomposition is primarily motivated by the different linear-complexity construction algorithms for the three matrix components, as we describe in detail in sections \ref{s:direct}, \ref{s:hmatrix}, and \ref{s:fmm}, respectively.

Our consideration for the decomposition of $\underline A$ starts with the relation between two elements $\tau,\tau'\in \mathcal T$. Our goal is to write the bilinear form in \eqref{e:weak-d} as the sum:
\begin{equation}\label{e:decomp}
    \mathcal{A}(U,V) = \!
    \sum_{\substack{\tau \in \mathcal{T}^\text{int}, \tau'\in \mathcal{T} \\ \tau \cap \tau' \neq \emptyset}} \! \!
    \mathcal{A}_{\tau,\tau'}(U,V) +
    \sum_{\substack{ \tau,\tau'\in \mathcal{T}^\text{int} \\  \tau \cap \tau' = \emptyset}}  \! \! \mathcal{K}_{\tau,\tau'}(U,V) +
    \sum_{\substack{\tau \in \mathcal{T}^\text{int}, \tau'\in \mathcal{T} \\ \tau \cap \tau' = \emptyset}} \! \!
    \mathcal{M}_{\tau,\tau'}(U,V).
\end{equation}
The first term of the sum handles the singular integrals that arises when $\tau$ and $\tau'$ are not separated. When $\tau$ and $\tau'$ are separated, the bilinear form $\mathcal A_{\tau,\tau'}(., .)$, which is now non-singular, can be split into two components: $\mathcal K_{\tau,\tau'}(., .)$ which represents effects of $\tau$ on all non-neighboring elements of the mesh, and
$\mathcal M_{\tau,\tau'}(., .)$ which represents effects from all non-neighboring elements of the mesh (including elements in $\mathcal{T}^\text{ext}$)  on $\tau$. Formal definitions for these two forms will be given later in \eqref{eq:sum}. \WL{We will show that under suitable mesh setting the matrix associated with the frist tem truns out to be sparse, while} hierarchical matrix algorithms and fast multipole methods allow us to construct the last two terms of \eqref{e:decomp} in linear complexity.

\subsection{Case \texorpdfstring{$\tau \cap \tau' \neq \emptyset$}{}}
When $\tau$ and $\tau'$ are direct neighbors or $\tau = \tau'$, we define the corresponding near-field bilinear form as
\begin{equation}\label{e:b}
    \begin{aligned}
        \mathcal B(U,V) & :=
        \WL{\sum_{\substack{\tau \in \mathcal{T}^\text{int}, \tau'\in \mathcal{T} \\ \tau \cap \tau' \neq \emptyset}}}
        \mathcal A_{\tau,\tau'}(U,V)
                        & =\sum_{\tau\in\mathcal{T}^\text{int}} \sum_{\tau'\subset \mathcal S_{\tau}} \mathcal A_{\tau,\tau'}(U,V) .
    \end{aligned}
\end{equation}
Here we note that the elements $\tau$ in the above form are restricted to those in $\Oi$ since both $U$ and $V$ vanish in $\Oe$. Denoting the near-field matrix $\underline B \in \mathbb R^{N\times N}$ with entries $\underline B_{ij} = \mathcal B(\psi_j,\psi_i)$, we assemble it directly from element-pair local stiffness matrices $\underline A^{\tau, \tau'}$ \WL{and call this process $\texttt{DirectEval}(\tau,\tau')$}.
The primary challenge here is the accurate evaluation of these local stiffness matrices which involves singular integrands with spatially-varying fractional order. As we describe in detail in Section~\ref{s:direct}, we generalize ideas from boundary element methods for transferring the singular integrands in \eqref{e:l-stiff} to analytical integrands (cf. \cite{acosta2017short}) to handle the case of variable order.

We further assume that the triangulation $\mathcal T$ is shape-regular and quasi-uniform, \ie there exist two positive constants $c_{\texttt{sr}}$ and $c_{\texttt{u}}$ so that for all $\tau\in\mathcal T$ and $\tau\subset \Omega$, there holds that
\[
    h_\tau \le c_{\texttt{sr}} \rho_\tau\quad\text{ and }\quad \max_{\tau\in\mathcal T}h_\tau \le c_{\texttt{u}} \min_{\tau\in \mathcal T} h_\tau ,
\]
with $h_\tau$ and $\rho_\tau$ denoting the size of $\tau$ and the maximum size of the inscribed ball in $\tau$. Thus $\underline B$ is a sparse matrix and the maximum number of the nonzero column entries, namely
\[
    \max_{i=1,\ldots, N}\#\{\bx_j \in \overline {\mathcal S}_\tau, \tau\subset\mathcal S_i\}
\]
is uniformly bounded and only depends on $c_{\texttt{sr}}$, insuring linear complexity.

\subsection{Case \texorpdfstring{$\tau \cap \tau' = \emptyset$}{}}
When the elements $\tau$ and $\tau'$ are separated, $|x - y| > 0$ for all $x \in \tau$ and $y \in \tau'$, the singularity of the integrand is avoided but another difficulty is introduced because of the quadratic number of the element pairs that have to be considered. A different strategy is hence needed for constructing this contribution to the stiffness matrix.

We start by observing that if both $\tau,\tau'\in \Oe$, we immediately get $\mathcal A_{\tau,\tau'}(\psi_j,\psi_i)=0$. We can therefore fix $\tau\in \mathcal T^{\text{int}}$ and further consider this case by whether $\tau'$ is located in $\Oi$ or in $\Oe$.

\subsubsection{Subcase \texorpdfstring{$\tau, \tau' \in \mathcal{T}^\text{int}$}{}}

Since the $\mathcal A_{\tau,\tau'}(.,.)$ is not singular, we can rewrite it as
\begin{align}
    \mathcal A_{\tau,\tau'}(U,V) & = \int_{\tau}\int_{\tau'}
    \bigg(U(x)V(x)+U(y)V(y)-U(x)V(y)-U(y)V(x)\bigg)
    \gamma(x,y)\diff y\diff x   \nonumber                                                                                                                                                       \\
                                 & = \bigg[\int_\tau U(x)V(x) \int_{\tau'} \gamma(x,y)\diff y\diff x  + \int_{\tau'} U(x)V(x) \int_{\tau}\gamma(x,y) \diff y\diff x\bigg]   \nonumber           \\
                                 & \quad\quad- \bigg[\int_\tau U(x) \int_{\tau'} \gamma(x,y)V(y)\diff y\diff x  + \int_{\tau'} U(x) \int_{\tau}\gamma(x,y)V(y) \diff y\diff x \bigg]  \nonumber \\
                                 & =:\mathcal{M}_{\tau,\tau'}(U,V) + \mathcal{K}_{\tau,\tau'}(U,V) \label{eq:sum}
\end{align}

We sum $\mathcal{K}_{\tau,\tau'}$ for all elements $\tau,\tau'$ in $\Oi$. By the symmetry of choosing between $\tau$ and $\tau'$, we can derive that
\begin{equation}\label{e:k}
    \begin{aligned}
        \mathcal K(U,V) & := \sum_{\tau\in \mathcal{T}^\text{int}}\sum_{\tau'\in \mathcal{T}^\text{int}} \mathcal{K}_{\tau,\tau'}(U,V) \\
                        & = - 2 \int_{\Oi}\int_{\Oi}\gamma_{\mathcal T}(x,y) U(x)V(y) \diff y\diff x ,
    \end{aligned}
\end{equation}
where $\gamma_{\mathcal T}$ is a mesh-dependent kernel
\begin{equation}\label{e:g-t}
    \gamma_{\mathcal T}(x,y) := \left\{
    \begin{aligned}
         & 0,\quad           &  & x\in \tau, y\in \tau' \text{ for some }\tau,\tau'\in \mathcal T \text{ with } \tau \cap \tau'\neq \emptyset , \\
         & \gamma(x,y),\quad &  & \text{otherwise} .                                                                                            \\
    \end{aligned}
    \right.
\end{equation}
This mesh-dependent kernel evaluates to zero precisely in the integration regions that have already been handled by the first case above, i.e., when $\tau \cap \tau' \neq \emptyset$.

Following a similar argument, we sum $\mathcal{M}_{\tau,\tau'}$ for all $\tau,\tau'\subset\Oi$ to define
\begin{equation}\label{e:m-int}
    \begin{aligned}
        \cMi (U,V) & :=\sum_{\tau\in \mathcal{T}^\text{int}} \sum_{\tau'\in \mathcal{T}^\text{int}}\mathcal{M}_{\tau,\tau'}(U,V)                                               \\
                   & = 2\sum_{\tau\in \mathcal{T}^\text{int}} \int_\tau U(x)V(x) \bigg(\int_{\Oi\backslash\mathcal S_\tau} \gamma(x,y) \diff y\bigg) \diff x . \\
    \end{aligned}
\end{equation}

\subsubsection{Subcase \texorpdfstring{$\tau \in \mathcal{T}^\text{int}$, $\tau' \in \mathcal{T}^\text{ext}$}{}}
In this case, we can simplify the form $\mathcal A_{\tau,\tau'}(.,.)$ knowing that $U(y)=V(y)=0$ for $y\in \tau'$ due to $U,V\in \mathbb V(\mathcal T)$. The second term in \eqref{eq:sum} vanishes and $\mathcal A_{\tau,\tau'}(.,.)$ is simply $\mathcal{M}_{\tau,\tau'}(U,V)$, which we sum for all $\tau\in\mathcal{T}^\text{int}$ and $\tau'\in\mathcal{T}^\text{ext}$ to obtain
\begin{equation}\label{e:m-ext}
    \begin{aligned}
        \cMe(U,V) & := 2\sum_{\tau\in \mathcal{T}^\text{int}}\sum_{\tau'\in \mathcal{T}^\text{ext}} \int_\tau U(x)V(x) \bigg(\int_{\tau'} \gamma(x,y) \diff y\bigg) \diff x \\
                  & = 2\sum_{\tau\in \mathcal{T}^\text{int}} \int_\tau U(x)V(x) \bigg(\int_{\Oe\backslash\mathcal S_\tau} \gamma(x,y) \diff y\bigg) \diff x.
    \end{aligned}
\end{equation}
Here we also have the factor 2 in the above equation because we accounted for the symmetric case $\tau'\in \mathcal{T}^\text{int}$ and $\tau\in \mathcal{T}^\text{ext}$.

Gathering \eqref{e:m-int} and \eqref{e:m-ext}, we can define the form
\begin{equation}\label{e:m}
    \begin{aligned}
        \mathcal M(U,V) & := \cMe(U,V) + \cMi(U,V)                                                                                                    \\
                        & =  2\sum_{\tau\in \mathcal{T}^\text{int}} \int_\tau U(x)V(x) \bigg(\int_{\Omega\backslash\mathcal S_\tau} \gamma(x,y) \diff y\bigg) \diff x \\ &=  2\sum_{\tau\in \mathcal{T}^\text{int}} \int_\tau U(x)V(x) \rho_{\mathcal T}(x)\diff x ,
                        &
    \end{aligned}
\end{equation}
where
\begin{equation}
    \rho_{\mathcal T}(x) := \int_{\Omega\backslash\mathcal S_\tau} \gamma(x,y) \diff y = \int_{\Omega} \gamma_{\mathcal T}(x,y) \diff y
\end{equation}
recalling that $\gamma_{\mathcal T}$ is defined in \eqref{e:g-t}. In this form, it is easy to see that a matrix $\underline M$ with entries $\underline M_{i,j}=\mathcal M(\psi_j,\psi_i)$ for $i,j=1,\ldots, N$ has the footprint of a mass matrix and may be assembled from element contributions. If $i$ and $j$ are the indices for the shape functions defined on $\tau$, an element stiffness matrix is written as:
        \begin{equation}\label{e:mass-l}
            \underline M^\tau = \left\{\int_\tau \psi_j(x)\psi_i(x)
            \rho_{\mathcal T}(x) \diff x\right\}_{ij}.
        \end{equation}
$\underline M$ is in fact a weighted mass matrix with $\rho_{\mathcal T}(x)$ playing the role of a density function. The evaluation of $\rho_{\mathcal T}(x)$ is required at all quadrature points, with each evaluation requiring a global integration. This appears to demand quadratic complexity in arithmetic operations. However, a method akin to a fast multipole method for spatially-varying kernels can evaluate $\rho_{\mathcal T}(x)$ at all quadrature points in linear complexity as we show in Section \ref{s:fmm}.

\subsection{Assembly of complete stiffness matrix}
The bilinear forms in \eqref{e:b}, \eqref{e:k}, \eqref{e:m} provide the three components, $\underline B$, $\underline K$, and $\underline M$ of the stiffness matrix $\underline A$ of the problem, where $\underline K_{i,j} = \mathcal K(\psi_j,\psi_i)$ for $i,j=1,\ldots,N$. The matrices $\underline B$ and $\underline M$ are sparse and can be stored directly. The matrix $\underline K$ however is formally dense. We take advantage of the structure of the bilinear form $\mathcal K(.,.)$ with its de-singularized kernel to store $\underline K$ in linear complexity using $\mathcal H^2$-matrix compression techniques \cite{hackbusch2002h2}. We also use construction algorithms that approximate the smooth kernel using piecewise polynomial interpolants and build the compressed matrix in linear complexity. We call this procedure $\hmatrix(\mathcal T)$ and describe it in Section~\ref{s:hmatrix}.

\renewcommand{\algorithmiccomment}[1]{\bgroup\hfill$\triangleright$~#1\egroup}

The overall algorithm can then be summarized as:
\begin{algorithm}
\caption{Assembly of the stiffness matrix components}\label{alg:assembly}
\begin{algorithmic}
    \STATE{$\underline K = \hmatrix(\mathcal T)$}                              \COMMENT{\emph{Section 5}}
    \STATE{Compute density function $\rho_{\mathcal{T}}$ at all quadrature points}.  \COMMENT{\emph{Section 6.2}}
    \FOR{$\tau\in \mathcal T$ and $\tau\in \mathcal{T}^{\text{int}}$}
    \FOR{$\tau'\subset \mathcal S_\tau$}
    \STATE{Compute $\underline B^{\tau, \tau'} = \texttt{DirectEval}(\tau,\tau')$}    \COMMENT{\emph{Section 4}}
    \STATE{\WL{Assemble $\underline B^{\tau, \tau'}$ into $\underline B$}}
    \ENDFOR
    \STATE{Compute $\underline M^\tau$} the element weighted mass matrix          \COMMENT{\emph{Section 6.1}}
    \STATE{\WL{Assemble $\underline M^\tau$ into $\underline M$}}
    \ENDFOR
    \State \textbf{return}  $\underline B$, $\underline K$, and $\underline M$
\end{algorithmic}  
\end{algorithm}

%!TEX root = paper.tex

\section{Direct computation of the singular near-field integrals}
\label{s:direct}

In this section, we describe our $2d$ implementation for computing the singular integrals $\mathcal A_{\tau,\tau'}(\psi_j,\psi_i)$ when two triangles $\tau,\tau'$ are touching each other. The implementation is based on techniques popularized in boundary element methods; see e.g. \cite[Chapter~5]{sauter2010boundary}. We refer to \cite{acosta2017short,ainsworth2017aspects,ainsworth2018towards} for constant-order problems and to \cite{d2021cookbook} for a more general class of nonlocal problems.

Let $\hat \tau$ be the reference triangle with vertices $(0,0)^T$, $(1,0)^T$ and $(1,1)^T$. For each triangle $\tau\in\mathcal T$, we denote by $\chi_\tau: \hat\tau\to \tau$ an affine transformation from the reference triangle to $\tau$. When mapping a touching element pair $(\tau, \tau')$ to $\hat{\tau}$ we distinguish three cases depending on the number of shared vertices (see Figure \ref{fig:duffy}):
\begin{itemize}
    \item If $\tau$ and $\tau'$ share only one vertex, the affine mappings $\chi_\tau$ and $\chi_{\tau'}$ satisfy $\chi_\tau ((0,0)^T) = {\chi_{\tau'}} ((0,0)^T)$;
    \item if $\tau$ and $\tau'$ share a common edge, we assume that $\chi_\tau ((\hat x_1,0)^T) = {\chi_{\tau'}} ((\hat x_1,0)^T)$ for $\hat x_1\in [0,1]$;
    \item if $\tau = \tau'$, we set $\chi_\tau = \chi_{\tau'}$.
\end{itemize}
Under the above assumptions, we set $(x,y) = \chi_{\tau,\tau'}(\hat x,\hat y) := (\chi_\tau(\hat x),\chi_{\tau'}(\hat y))$.

\begin{figure}[ht]
    \vspace*{0pt}
    \begin{center}
        \includegraphics[width=.9\textwidth]{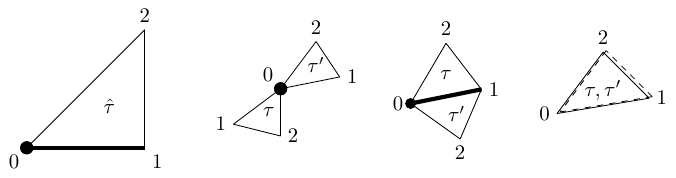}
    \end{center}
    \caption{Mappings $\chi_\tau$, $\chi_{\tau'}$ from reference $\hat{\tau}$ to element pair $(\tau, \tau')$ configurations.}
    \label{fig:duffy}
\end{figure}

Suppose that the supports of basis functions $\psi_i$ and $\psi_j$ contain either $\tau$ or $\tau'$, we shall compute $\mathcal A_{\tau,\tau'}(\psi_j,\psi_i)$ on the reference elements, namely
\begin{equation}\label{e:tgt-ref}
    \begin{aligned}
        \mathcal A_{\tau,\tau'}(\psi_j,\psi_i)
        =  |J_\tau| & |J_{\tau'}|  \int_{\hat\tau}\int_{\hat\tau}
        \bigg( a(\chi_\tau(\hat x),\chi_{\tau'}(\hat y))                                                                                                                                                                                                                                   \\
                    & \times \frac{[\psi_j(\chi_\tau(\hat x)) - \psi_j(\chi_{\tau'}(\hat y))][\psi_i(\chi_\tau(\hat x)) - \psi_i(\chi_{\tau'}(\hat y))]}{|\chi_\tau(\hat x) - \chi_{\tau'}(\hat y)|^{2 + s(\chi_\tau(\hat x)) + s(\chi_{\tau'}(\hat y))}} \bigg)\diff \hat x\diff \hat y ,
    \end{aligned}
\end{equation}
where $J_\tau$ and $J_{\tau'}$ denote the Jacobian of $\chi_\tau$ and $\chi_{\tau'}$ respectively and $|J_\tau|$ and $|J_{\tau'}|$ are the absolute values of the corresponding determinants. The difficulties in the evaluation of the above integral are primarily due to the singularities at $\chi_{\tau}(\hat x) = \chi_{\tau'}(\hat y)$. The strategy for accurate evaluation is to split the integration domain $\hat\tau\times\hat\tau$ into several subregions (depending on the relation between $\tau$ and $\tau'$) so that the integrand in each subregion can be transformed into $[0,1]^4$ and is analytic. We then compute the resulting integrals with tensor-product Gaussian quadrature schemes.

\subsection{Vertex-sharing case}\label{ss:v}
When $\tau$ and $\tau'$ share only one vertex, we use the transformations $(\hat x,\hat y) = \mathfrak T_V^{(i)}(\xi,\boldsymbol \eta)$ with $\xi\in (0,1)$ and $\boldsymbol\eta\in (0,1)^3 \subset \mathbb R^3$, satisfying
\begin{equation}\label{e:map-v}
    \mathfrak T_V^{(1)}(\xi,\boldsymbol\eta) = (\xi,\xi\eta_1,\xi\eta_2,\xi\eta_2\eta_3)
    \quad\text{and}\quad
    \mathfrak T_V^{(2)}(\xi,\boldsymbol\eta) = (\xi\eta_2,\xi\eta_2\eta_3,\xi,\xi\eta_1) .
\end{equation}
So we can decompose $\hat\tau\times\hat\tau$ into two regions and write
\begin{equation}\label{e:int-v}
    \mathcal A_{\tau,\tau'}(\psi_j,\psi_i)
    =  |J_\tau| |J_{\tau'}|\sum_{k=1}^2 \int_{(0,1)^4}a(x,y)
    \frac{(\Phi^{(k)}_{V,j}(\boldsymbol\eta)\Phi^{(k)}_{V,i}(\boldsymbol\eta))\xi^{3-s(x)-s(y)}\eta_2}{|d_V^{(k)}(\boldsymbol\eta)|^{2+s(x)+s(y)}}
    \diff \boldsymbol\eta\diff \xi .
\end{equation}
Here we recall that $(x,y) = \chi_{\tau,\tau'}\circ \mathfrak T_V^{(k)}(\xi,\boldsymbol \eta)$ for $k=1,2$. $\Psi_{V,i}^{(k)}$ provides the difference between $\psi_i(x)-\psi_i(y)$ for all the five associated shape functions and rescaled by $\xi^{-1}$, namely
\[
    \begin{aligned}
        \{\eta_2-1,1-\eta_1,\eta_1,\eta_2\eta_3-\eta_2,-\eta_2\eta_3\}, &  & \quad\text{for } k = 1, \\
        \{1-\eta_2,\eta_2-\eta_2\eta_3,\eta_2\eta_3,\eta_1-1,-\eta_1\}, &  & \quad\text{for } k = 2. \\
    \end{aligned}
\]
The denominator $d_V^{(k)}$ comes from the difference $x-y$ but is rescaled by extracting the factor $\xi$ to give
\begin{equation}\label{e:denomi-v}
    d_V^{(1)}(\boldsymbol\eta) = J_\tau(1,\eta_1)^T - J_{\tau'}(\eta_2,\eta_2\eta_3)^T
    \quad\text{and}\quad
    d_V^{(2)}(\boldsymbol\eta) = J_\tau(\eta_2,\eta_2\eta_3)^T - J_{\tau'}(1,\eta_1)^T .
\end{equation}

The integrand in \eqref{e:int-v} is now non-singular and can thus be approximated by the tensorized Gaussian quadrature rule with order $n$. Here we apply an extra transformation for the variable $\xi$ with $\xi = \zeta^{1/(4-2\overline s)}$ and rewrite \eqref{e:int-v} as
\begin{equation}\label{e:int-v-2}
    \begin{aligned}
        \mathcal A_{\tau,\tau'}(\psi_j,\psi_i)
        =  \frac{|J_\tau| |J_{\tau'}|}{4-2\overline s}\sum_{k=1}^2 \int_{(0,1)^4} & \bigg(a(x,y)\zeta^{\frac{2\overline s-s(x)-s(y)}{4-2\overline s}}\eta_2                                                           \\
                                                                                  & \times\frac{\Phi^{(k)}_{V,j}(\boldsymbol\eta)\Phi^{(k)}_{V,i}(\boldsymbol\eta)}{|d_V^{(k)}(\boldsymbol\eta)|^{2+s(x)+s(y)}}\bigg)
        \diff \boldsymbol\eta\diff \zeta ,
    \end{aligned}
\end{equation}
where  $(x,y) = \chi_{\tau,\tau'}\circ \mathfrak T_V^{(k)}(\zeta^{1/(4-2\overline s)},\boldsymbol \eta)$ . We shall apply the tensorized Gaussian quadrature rule to the above integral and
denote the corresponding approximation of $\mathcal A_{\tau,\tau'}(\psi_j,\psi_i)$ by $Q^n_{V}(\psi_j,\psi_i)$. The reason to compute $\mathcal A_{\tau,\tau'}(\psi_j,\psi_i)$ by \eqref{e:int-v-2} rather than \eqref{e:int-v} is to improve the quadrature approximation rates for a more general setting of $s(x)$ and $a(x,y)$; see Remark~\ref{r:general} below for details.

To analyze the quadrature error, we follow the argument from Section~5.3.2 in \cite{sauter2010boundary}, which is based on the derivative-free quadrature error
estimates \cite{davis1975interpolation}. The error estimate for $\int_{(0,1)^4}f(x_1,\ldots,x_4)\diff \mathbf x$ requires that for each direction $x_i$, the integrand $f$ can be analytically extended from $[0,1]$ to $\mathcal E_{\rho_i}\subset \mathbb C$, where $\mathcal E_{\rho_i}$ is a closed ellipse with the focus points $0$ and $1$, and where $\rho_i>\tfrac12$ denotes the sum of semimajor and semiminor axes. Then, the error between the exact integral and its $n$-th order tensor product Gaussian quadrature, denoted by $Q^n f$, can be estimated by (cf. \cite[Theorem~5.3.15]{sauter2010boundary})
\begin{equation}\label{i:q-err}
    \bigg|\int_{(0,1)^4} f\diff \mathbf x - Q^n f\bigg| \lesssim \sum_{i=1}^4(2\rho_i)^{-2n} \max_{\substack{\mathbf x\in [0,1]^4\\ z\in \partial{\mathcal E}_{\rho_i}}}
    |f(x_1,\ldots,x_{i-1},z,x_{i+1},\ldots, x_4)| .
\end{equation}
To verify that the above estimate can by applied to \eqref{e:int-v}, we shall check that the integrand in \eqref{e:int-v-2} (or \eqref{e:int-v}) can be analytically extended to $\mathcal E_\rho$ for each component with $\rho>\tfrac12$. To this end, we first note that the mappings $\mathfrak T_V^{(k)}$ in \eqref{e:map-v} are component-wise analytic in $\mathbb C$. Thus, the analyticity also holds for the mapping $\chi_{\tau,\tau'}\circ \mathfrak T_V^{(k)}$ as well as the diffusion coefficient $a(x,y)$ and $s(x)+s(y)$ due to the analyticity assumption for $a(.,.)$ and $s(.)$. So $\zeta^{(2\overline s-s(x)-s(y))/(4-2\overline s)}$ (or $\xi^{3-s(x)-s(y)}$) and $|d_V^{(k)}(\boldsymbol\eta)|^{2+s(x)+s(y)}$ are analytic. Noting that the product $(\Phi^{(k)}_{V,j}(\boldsymbol\eta)\Phi^{(k)}_{V,i}(\boldsymbol\eta))$ is a polynomial with degree no more than four, there exists an analytic extension of it in the complex space.

Now we estimate the maximum of the integrand for each component in $(\zeta,\boldsymbol\eta)$. For the numerator, we choose $\rho\in (\tfrac12, 1)$ so that
\begin{equation}\label{i:zeta}
    \sup_{\substack{\zeta\in \mathcal E_\rho\\ \boldsymbol\eta\in[0,1]^3}}|a(x,y)(\Phi^{(k)}_{V,j}(\boldsymbol\eta)\Phi^{(k)}_{V,i}(\boldsymbol\eta))\zeta^{\frac{2\overline s-s(x)-s(y)}{4-2\overline s}}\eta_2| \lesssim 1
\end{equation}
and
\[
    \sup_{\substack{\eta_\ell\in \mathcal E_\rho\\ \eta_k\in[0,1], k\neq \ell\\\zeta\in[0,1]}}|a(x,y)(\Phi^{(k)}_{V,j}(\boldsymbol\eta)\Phi^{(k)}_{V,i}(\boldsymbol\eta))\zeta^{\frac{2\overline s-s(x)-s(y)}{4-2\overline s}}\eta_2| \lesssim 1,\quad\text{for } \ell = 1,2,3 .
\]
For the denominator, we further assume that $h$ is sufficiently small so that $|d_V^{(k)}(\boldsymbol \eta)| \le 1$ (since $|d_V^{(k)}(\boldsymbol \eta)|\sim h$). So there holds
\begin{equation}
    |d_V^{(k)}(\boldsymbol\eta)|^{2+s(x)+s(y)} \ge  |d_V^{(k)}(\boldsymbol\eta)|^{2+2\overline s} .
\end{equation}
This implies that
\[
    \sup_{\substack{\zeta\in \mathcal E_\rho\\ \boldsymbol\eta\in[0,1]^3}}\frac{1}{ |d_V^{(k)}(\boldsymbol\eta)|^{2+s(x)+s(y)} } \lesssim h^{-2-2\overline s}\quad\text{and}\quad
    \sup_{\substack{\eta_\ell\in \mathcal E_\rho\\ \eta_k\in[0,1], k\neq \ell\\\xi\in[0,1]}}\frac{1}{ |d_V^{(k)}(\boldsymbol\eta)|^{2+s(x)+s(y)} } \lesssim h^{-2-2\overline s} .
\]
Gathering the above estimates and invoking \eqref{i:q-err} for the approximation \eqref{e:int-v-2} (or \eqref{e:int-v}) one sees that when $h$ is sufficiently small, there holds
\begin{equation}\label{i:q-v-esti}
    |\mathcal A_{\tau,\tau'}(\psi_j,\psi_i) - Q_{V}^n(\psi_j,\psi_i)|
    \lesssim h^{2-2\overline s} (2\rho)^{-2n} .
\end{equation}

\subsection{Edge-sharing case}\label{ss:e}
When $\tau$ and $\tau'$ share a common edge, we shall use the the following transformations $(\hat  x,\hat y) = \mathfrak T_E^{(k)}(\xi,\boldsymbol \eta)$ for $k=1,\ldots,5$ and $(\xi,\boldsymbol \eta)\in [0,1]^4$:
\[
    \begin{aligned}
        \mathfrak T_E^{(1)}(\xi,\boldsymbol\eta) & = (\xi,\xi\eta_1\eta_3,\xi(1-\eta_1\eta_2), \xi\eta_1(1-\eta_2)) ,               \\
        \mathfrak T_E^{(2)}(\xi,\boldsymbol\eta) & = (\xi,\xi\eta_1,\xi(1-\eta_1\eta_2\eta_3),\xi\eta_1\eta_2(1-\eta_3)),           \\
        \mathfrak T_E^{(3)}(\xi,\boldsymbol\eta) & = (\xi(1-\eta_1\eta_2), \xi\eta_1(1-\eta_2), \xi , \xi\eta_1\eta_2\eta_3) ,      \\
        \mathfrak T_E^{(4)}(\xi,\boldsymbol\eta) & = (\xi(1-\eta_1\eta_2\eta_3), \xi\eta_1\eta_2(1-\eta_3), \xi, \xi\eta_1) ,       \\
        \mathfrak T_E^{(5)}(\xi,\boldsymbol\eta) & = (\xi(1-\eta_1\eta_2\eta_3), \xi\eta_1(1-\eta_2\eta_3), \xi, \xi\eta_1\eta_2) .
    \end{aligned}
\]
We again use the change of variable $\xi = \zeta^{1/(4-2\overline s)}$ to write \eqref{e:tgt-ref} as
\begin{equation}\label{e:int-e}
    \begin{aligned}
        \mathcal A_{\tau,\tau'}(\psi_j,\psi_i)
        = \frac{|J_\tau||J_{\tau'}|}{4-2\overline s}
        \sum_{k=1}^5 \int_{(0,1)^4} & \bigg(a(x,y)\zeta^{\frac{2\overline s-s(x)-s(y)}{4-2\overline s}}J_E^{(k)}(\boldsymbol\eta)                                                                        \\
                                    & \times\frac{\Psi_{E,j}^{(k)}(\boldsymbol\eta)\Psi_{E,i}^{(k)}(\boldsymbol\eta)}{|d_E^{(k)}(\boldsymbol\eta)|^{2+s(x)+s(y)}}\bigg) \diff\boldsymbol\eta\diff\zeta .
    \end{aligned}
\end{equation}
Here $(x,y) = \chi_{\tau,\tau'}\circ \mathfrak T_E^{(k)}(\zeta^{1/(4-2\overline s)},\boldsymbol \eta)$ for $k=1,\ldots,5$. In the numerator above, $J_E^{(1)} = \eta_1^2$ and $J_E^{(k)} = \eta_1^2\eta_2$ are the Jacobians. The functions $\Psi_{E,i}^{(k)}$ are the transformations (rescaled by $\xi^{-1}$) of $\psi_i(x)-\psi_i(y)$ for all four associated shape functions (defined on the corresponding four vertices of $\tau\cup\tau'$):
\[
    \begin{aligned}
        \{-\eta_1\eta_2,\eta_1(1-\eta_3), \eta_1\eta_3, -\eta_1(1-\eta_2)\},             &  & \quad\text{for } k = 1, \\
        \{-\eta_1\eta_2\eta_3,-\eta_1(1-\eta_2),\eta_1,-\eta_1\eta_2(1-\eta_3)\},        &  & \quad\text{for } k = 2, \\
        \{\eta_1\eta_2, -\eta_1(1-\eta_2\eta_3),\eta_1(1-\eta_2),-\eta_1\eta_2\eta_3\},  &  & \quad\text{for } k = 3, \\
        \{\eta_1\eta_2\eta_3, \eta_1(1-\eta_2), \eta_1\eta_2(1-\eta_3),-\eta_1\},        &  & \quad\text{for } k = 4, \\
        \{\eta_1\eta_2\eta_3, -\eta_1(1-\eta_2),\eta_1(1-\eta_2\eta_3), -\eta_1\eta_2\}, &  & \quad\text{for } k = 5. \\
    \end{aligned}
\]
In the denominator, the rescaled distances $d_E^{(k)}(\boldsymbol\eta)$ are
\[
    \begin{aligned}
        d_E^{(1)}(\boldsymbol\eta) & = J_\tau(1,\eta_1\eta_3)^T - J_{\tau'}(1-\eta_1\eta_2,\eta_1(1-\eta_2))^T               \\
        d_E^{(2)}(\boldsymbol\eta) & = J_\tau(1,\eta_1)^T - J_{\tau'}(1-\eta_1\eta_2\eta_3,\eta_1\eta_2(1-\eta_3))^T,        \\
        d_E^{(3)}(\boldsymbol\eta) & = J_\tau(1-\eta_1\eta_2,\eta_1(1-\eta_2))^T - J_{\tau'}(1,\eta_1\eta_2\eta_3)^T         \\
        d_E^{(4)}(\boldsymbol\eta) & = J_\tau(1-\eta_1\eta_2\eta_3, \eta_1\eta_2(1-\eta_3))^T - J_{\tau'}(1,\eta_1)^T,       \\
        d_E^{(5)}(\boldsymbol\eta) & = J_\tau(1-\eta_1\eta_2\eta_3,\eta_1(1-\eta_2\eta_3))^T - J_{\tau'}(1,\eta_1\eta_2)^T .
    \end{aligned}
\]

Following a similar argument as in the previous case, we can show that the function in the above integral can be analytically extended to $\mathcal E_\rho$ with some $\rho\in (\tfrac12,1)$ for each component of $(\xi,\boldsymbol\eta)$. The detailed proof is omitted for brevity. So we can apply the $n$-th order tensorized Gaussian quadrature rule, denoted by $Q_E^n(\psi_j,\psi_i)$, to approximate the integral. Assuming that the mesh size $h$ is small enough so that $d_E^{(k)}(\boldsymbol\eta) < 1$ for $k=1,\ldots, 5$, the quadrature error can be estimated with
\begin{equation}\label{i:q-e-esti}
    |\mathcal A_{\tau,\tau'}(\psi_j,\psi_i) - Q_{E}^n(\psi_j,\psi_i)|
    \lesssim h^{2-2\overline s} (2\rho)^{-2n} .
\end{equation}

\subsection{Identical case}\label{ss:i}
When $\tau=\tau'$ we use the following transformations $(\hat x,\hat y) = \mathfrak T_I^{(k)}(\xi,\boldsymbol\eta)$ for $k=1,\ldots,6$ and $(\xi,\boldsymbol \eta)\in [0,1]^4$:
\[
    \begin{aligned}
        \mathfrak T_I^{(1)}(\xi,\boldsymbol\eta) & = (\xi, \xi(1-\eta_1+\eta_1\eta_2), \xi(1-\eta_1\eta_2\eta_3), \xi(1-\eta_1)) ,            \\
        \mathfrak T_I^{(2)}(\xi,\boldsymbol\eta) & = (\xi, \xi(1-\eta_2+\eta_2\eta_3), \xi(1-\eta_1\eta_2), \xi\eta_1(1-\eta_2)) ,            \\
        \mathfrak T_I^{(3)}(\xi,\boldsymbol\eta) & = (\xi(1-\eta_1\eta_2\eta_3), \xi\eta_1(1-\eta_2\eta_3), \xi, \xi\eta_1(1-\eta_2)) ,       \\
        \mathfrak T_I^{(k)}(\xi,\boldsymbol\eta) & = \mathfrak S\circ \mathfrak T_I^{(k-3)}(\xi,\boldsymbol\eta),\quad\text{for } k = 4,5,6 .
    \end{aligned}
\]
with $\mathfrak{S}(\hat x,\hat y) = (\hat y,\hat x)$. Thanks to the symmetry property between the mappings $\mathfrak T_I^{(k)}$ and $\mathfrak T_I^{(k+3)}$ for $k=1,2,3$ (i.e., the operator $\mathfrak{S}$) as well as the symmetry property for the kernel function, we arrive at
\begin{equation}\label{e:int-i}
    \begin{aligned}
        \mathcal A_{\tau,\tau'}(\psi_j,\psi_i) = \frac{|J_\tau|^2}{2-\overline s}
        \sum_{k=1}^3\int_{(0,1)^4} & \bigg(\xi^{\frac{2\overline s-s(x)-s(y)}{4-2\overline s}}\eta_1^{2-s(x)-s(y)}\eta_2^{1-s(x)-s(y)}                                              \\
                                   & \times\frac{a(x,y)\Psi_{I,j}^{(k)}(\eta_3)\Psi_{I,i}^{(k)}(\eta_3)}{|d_I^{(k)}(\eta_3)|^{2+s(x)+s(y)}}\bigg) \diff\boldsymbol\eta\diff \zeta ,
    \end{aligned}
\end{equation}
where $(x,y) = \chi_{\tau,\tau'}\circ \mathfrak T_I^{(k)}(\xi,\boldsymbol \eta)$ with $\xi = \zeta^{1/(4-2\overline s)}$, $\Psi_{I,i}^{(k)}(\eta_3)$ are the three rescaled shape functions ($\psi_i(x)-\psi_j(y)$) defined on $\tau$ provided by
\[
    \begin{aligned}
        \{-\eta_3,\eta_3-1,1\}, &  & \quad\text{for } k = 1, \\
        \{-1,1-\eta_3,\eta_3\}, &  & \quad\text{for } k = 2, \\
        \{\eta_3,-1,1-\eta_3\}, &  & \quad\text{for } k = 3, \\
    \end{aligned}
\]
and
\[
    d_I^{(1)}(\eta_3) = J_\tau(\eta_3,1)^T,\,
    d_I^{(2)}(\eta_3) = J_\tau(1,\eta_3)^T,\,
    d_I^{(3)}(\eta_3) = J_\tau(\eta_3,1-\eta_3)^T
\]
are the rescaled distances between $x$ and $y$.
Notice that the integrand in \eqref{e:int-i} could be singular at $\eta_2 = 0$ due to the term $\eta_2^{1-s(x)-s(y)}$ when $s(x)+s(y)>1$. To resolve this, we use the change of variable $\eta_2 = t^{1/(2-2\overline s)}$ to write
\begin{equation}\label{e:int-i-2}
    \begin{aligned}
        \mathcal A_{\tau,\tau'}(\psi_j,\psi_i) = \frac{|J_\tau|^2}{(2-\overline s)(2-2\overline s)}
        \sum_{k=1}^3\int_{(0,1)^4} & \bigg(\xi^{\frac{2\overline s-s(x)-s(y)}{4-2\overline s}}\eta_1^{2-s(x)-s(y)}t^{\tfrac{2\overline s-s(x)-s(y)}{2-2\overline s}  }                        \\
                                   & \times\frac{a(x,y)\Psi_{I,j}^{(k)}(\eta_3)\Psi_{I,i}^{(k)}(\eta_3)}{|d_I^{(k)}(\eta_3)|^{2+s(x)+s(y)}}\bigg) \diff \eta_1\diff t\diff\eta_3\diff \zeta ,
    \end{aligned}
\end{equation}
where we recall that $(x,y) = \chi_{\tau,\tau'}\circ \mathfrak T_I^{(k)}(\zeta^{1/(4-2\overline s)},\eta_1, t^{1/(2-2\overline s)},\eta_3)$. We then apply the tensorized Gaussian quadrature to \eqref{e:int-i-2}. If $\overline s<\tfrac12$, we can also simply apply the same quadrature rule to \eqref{e:int-i}. Denoting $Q_I^n(\psi_j,\psi_i)$ the resulting quadrature approximation with order $n$ based on \eqref{e:int-i} or \eqref{e:int-i-2}. Following the arguments above, we obtain that when $h$ is small enough, there holds
\begin{equation}\label{i:q-i-esti}
    |\mathcal A_{\tau,\tau'}(\psi_j,\psi_i) - Q_{I}^n(\psi_j,\psi_i)|
    \lesssim h^{2-2\overline s} (2\rho)^{-2n} ,
\end{equation}
for some $\rho\in (\tfrac12,1]$.

\subsection{Quadrature error for the singular near-field formulation}
The quadrature schemes provided by the proceeding subsections for all $\tau\in \Oi$ and $\tau'\in\mathcal S_\tau$ form an approximation of the bilinear form $\mathcal B(U,V)$ for $U,V\in \mathbb V(\mathcal T)$. We denote this approximation by $Q^n_{\mathcal B}(U,V)$. The following proposition shows the corresponding consistency error. The proof follows the standard arguments for the quadrature approximation for bilinear forms (see e.g. \cite[Theorem~10]{ainsworth2017aspects} and \cite[Theorem~5.3.29]{sauter2010boundary}). Here we provide a proof for completeness.
\begin{proposition}[quadature error for near-field approximations]\label{p:b-q}
    For $U,V\in\mathbb V(\mathcal T)$, let $Q^n_{\mathcal B}(U,V)$ be the approximation of $\mathcal B(U,V)$ by replacing $\mathcal A_{\tau,\tau'}(U,V)$ with $\mathcal Q^n_V$, $\mathcal Q^n_E$ or $\mathcal Q^n_I$ defined in \Cref{ss:v,ss:e,ss:i} depending on the relations between $\tau$ and $\tau'$. When the mesh size $h$ is small enough, there exists $\rho\in(\tfrac12,1)$ such that
    \[
        |\mathcal B(U,V) - \mathcal Q^n_{\mathcal B}(U,V)|
        \lesssim h^{-2\overline s}(2\rho)^{-2n} \|U\|_{L^2(\Oi)}\|V\|_{L^2(\Oi)} .
    \]
\end{proposition}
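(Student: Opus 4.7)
The plan is to reduce the global approximation error on $\mathcal{B}(U,V)$ to the per-basis-function quadrature error estimates \eqref{i:q-v-esti}, \eqref{i:q-e-esti}, \eqref{i:q-i-esti} already established for the three touching configurations, and then pay the price of passing from nodal coefficients to $L^2$ norms via a standard inverse estimate on the quasi-uniform mesh.

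First I would expand $U=\sum_j u_j\psi_j$ and $V=\sum_i v_i\psi_i$, so that for each touching pair $(\tau,\tau')$,
\begin{equation*}
\mathcal{A}_{\tau,\tau'}(U,V)-Q^n_\bullet(U,V)
=\sum_{i,j\in\mathcal{I}(\tau,\tau')} u_j v_i\bigl(\mathcal{A}_{\tau,\tau'}(\psi_j,\psi_i)-Q^n_\bullet(\psi_j,\psi_i)\bigr),
\end{equation*}
where $Q^n_\bullet$ stands for $Q^n_V$, $Q^n_E$, or $Q^n_I$ according to whether $\tau,\tau'$ share only a vertex, share an edge, or coincide. Shape regularity bounds $\#\mathcal{I}(\tau,\tau')$ by a constant depending only on $c_{\texttt{sr}}$, so applying \eqref{i:q-v-esti}, \eqref{i:q-e-esti}, \eqref{i:q-i-esti} termwise yields a common $\rho\in(\tfrac12,1)$ and
\begin{equation*}
\bigl|\mathcal{A}_{\tau,\tau'}(U,V)-Q^n_\bullet(U,V)\bigr|
\lesssim h^{2-2\overline s}(2\rho)^{-2n}\,\bigl(\max_{j\in\mathcal{I}(\tau,\tau')}|u_j|\bigr)\bigl(\max_{i\in\mathcal{I}(\tau,\tau')}|v_i|\bigr).
\end{equation*}

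Next I would invoke the inverse estimate $|u_j|\lesssim h^{-1}\|U\|_{L^2(\mathcal{S}_j)}$ for piecewise linears on a quasi-uniform mesh in $d=2$ (equivalently, the $L^\infty$--$L^2$ inverse inequality $\|U\|_{L^\infty(\tau)}\lesssim h^{-1}\|U\|_{L^2(\tau)}$). Combining with the previous line converts the nodal maxima into
\begin{equation*}
\bigl|\mathcal{A}_{\tau,\tau'}(U,V)-Q^n_\bullet(U,V)\bigr|
\lesssim h^{-2\overline s}(2\rho)^{-2n}\,\|U\|_{L^2(\mathcal S_\tau\cup\mathcal S_{\tau'})}\|V\|_{L^2(\mathcal S_\tau\cup\mathcal S_{\tau'})},
\end{equation*}
so the two lost powers of $h$ are exactly what degrade $h^{2-2\overline s}$ into the stated $h^{-2\overline s}$.

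Finally I would sum over all touching pairs in $\mathcal{B}(U,V)=\sum_{\tau\in\mathcal{T}^{\text{int}}}\sum_{\tau'\subset\mathcal S_\tau}\mathcal{A}_{\tau,\tau'}(U,V)$, apply Cauchy--Schwarz on the double sum, and use the fact that each element $\tau'$ lies in only a uniformly bounded number of patches $\mathcal S_\tau$ (again a consequence of shape regularity) to telescope $\sum_{\tau,\tau'}\|U\|_{L^2(\mathcal S_\tau\cup\mathcal S_{\tau'})}^2\lesssim\|U\|_{L^2(\Oi)}^2$, where restriction to $\Oi$ is legitimate because $U,V\in\mathbb V(\mathcal T)$ vanish on $\Oe$. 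This yields the claimed bound.

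The main obstacle is really a bookkeeping one: checking that the per-pair estimate carries the correct $h$ exponent so that, after the inverse estimate, the global sum produces $h^{-2\overline s}$ rather than something worse. In particular, one must verify that the smallness-of-$h$ hypotheses invoked in \Cref{ss:v,ss:e,ss:i} (needed to bound $|d_V^{(k)}|,|d_E^{(k)}|,|d_I^{(k)}|\le 1$ and to extract a common ellipse parameter $\rho$) can be met uniformly across all touching configurations on a quasi-uniform family; the affine scaling by $J_\tau$ and the quasi-uniformity constant $c_{\texttt{u}}$ make this uniformity transparent, but it should be stated explicitly when reducing the three per-configuration estimates to a single $\rho$ in the final inequality.
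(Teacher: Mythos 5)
Your proposal is correct and follows essentially the same route as the paper's proof: termwise application of the per-configuration estimates \eqref{i:q-v-esti}--\eqref{i:q-i-esti}, conversion of nodal coefficients to local $L^2$ norms at the cost of $h^{-d}$ (your $h^{-1}\cdot h^{-1}$ for $d=2$), and a final Cauchy--Schwarz over touching pairs using the uniformly bounded overlap from shape regularity. The only cosmetic difference is that the paper bounds $\sum_{i,j}|u_jv_i|$ by Cauchy--Schwarz over the index set rather than by nodal maxima, which is equivalent.
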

\begin{proof}
    Denote $e^{ij}_{\tau,\tau'} = \mathcal A_{\tau,\tau'}(\psi_j,\psi_i) - Q^n(\psi_j,\psi_i)$, where $Q^n$ is the tensor product Gaussian quadrature form $Q_V^n$, $Q_E^n$, or $Q_I^n$. According to the quadrature error estimates \cref{i:q-v-esti,i:q-e-esti,i:q-i-esti}, there holds that for $h$ sufficiently small,
    \[
        |e^{ij}_{\tau,\tau'}| \lesssim h^{2-2\overline s}(2\rho)^{-2n} .
    \]
    Now we set $U=\sum_{j=1}^Nu_j\psi_j$ and $V=\sum_{i=1}^N v_i\psi_i$. To estimate the target error, we use the definition \eqref{e:b} and write
    \begin{equation}\label{i:b-form-err}
        \begin{aligned}
            |\mathcal B(U,V) - \mathcal Q^n_{\mathcal B}(U,V)| & \le \sum_{\substack{\tau\in\mathcal T^{\text{int}}                                     \\ \tau'\in\mathcal S_\tau}} \sum_{i,j\in \mathcal I(\tau,\tau')}  |e^{ij}_{\tau,\tau'}u_jv_i|                                    \\
                                                               & \lesssim h^{2-2\overline s}(2\rho)^{-2n}\sum_{\substack{\tau\in\mathcal T^{\text{int}} \\ \tau'\in\mathcal S_\tau}} \sum_{i,j\in \mathcal I(\tau,\tau')} |u_jv_i|.
        \end{aligned}
    \end{equation}
    Here, we recall that $\mathcal I(\tau,\tau')\subset \{1,\ldots, N\}$ is the index set whose associated global shape functions are non-zero on $\tau\cup\tau'$. Utilizing the Cauchy-Schwarz inequality, we have
    \begin{equation}\label{i:sum-1}
        \begin{aligned}
            \sum_{i,j\in\mathcal I(\tau,\tau')} |u_jv_i| & \le \#\mathcal I(\tau,\tau')
            \bigg(\sum_{j\in \mathcal I(\tau,\tau')}|u_j|^2\bigg)^{1/2}
            \bigg(\sum_{i\in \mathcal I(\tau,\tau')}|v_i|^2\bigg)^{1/2}                                                         \\
                                                         & \lesssim h^{-2}\|U\|_{L^2(\tau\cup\tau')}\|V\|_{L^2(\tau\cup\tau')},
        \end{aligned}
    \end{equation}
    where $\#\mathcal I(\tau,\tau')$ denotes the cardinality of $\mathcal I(\tau,\tau')$ and where for the last inequality we used the fact that $\#\mathcal I(\tau,\tau')\lesssim 1$ and that
    \[
        \sum_{j\in \mathcal I(\tau,\tau')}|u_j|^2\lesssim h^{-2}\|U\|_{L^2(\tau\cup\tau')}^2 .
    \]
    We note that due to the shape-regularity property for $\mathcal T$, the number of cells in $\mathcal S_\tau$ is uniformly bounded for all $\tau\in \Oi$. So we insert \eqref{i:sum-1} into \eqref{i:b-form-err} and continue to bound the summation in \eqref{i:b-form-err} by the Cauchy-Schwarz inequality. This leads to
    \[
        \begin{aligned}
            \sum_{\substack{\tau\in\mathcal T^{\text{int}}                                      \\ \tau'\in\mathcal S_\tau}} & \sum_{i,j\in \mathcal I(\tau,\tau')} |u_jv_i|
            \lesssim     h^{-2}\sum_{\tau\in\mathcal T^{\text{int}} }
            \sum_{\tau'\in\mathcal S_\tau} \|U\|_{L^2(\tau\cup\tau')}\|V\|_{L^2(\tau\cup\tau')} \\
             & \lesssim h^{-2}\bigg(\sum_{\substack{\tau\in\mathcal T^{\text{int}}              \\ \tau'\in\mathcal S_\tau}} \|U\|_{L^2(\tau\cup\tau')}^2\bigg)^{1/2}
            \bigg(\sum_{\substack{\tau\in\mathcal T^{\text{int}}                                \\ \tau'\in\mathcal S_\tau}} \|V\|_{L^2(\tau\cup\tau')}^2\bigg)^{1/2} \\
             & \lesssim h^{-2} \|U\|_{L^2(\Oi)}\|V\|_{L^2(\Oi)} .
        \end{aligned}
    \]
    Together with \eqref{i:b-form-err}, we arrive at
    \[
        |\mathcal B(U,V) - \mathcal Q^n_{\mathcal B}(U,V)|
        \lesssim h^{-2\overline s}(2\rho)^{-2n} \|U\|_{L^2(\Oi)}\|V\|_{L^2(\Oi)}
    \]
    as desired.
\end{proof}

We end the section with the following remarks.

\begin{remark}[order of the quadrature rule]\label{r:order}
    In order to get the convergence rate $h^{\beta}$ with $\beta>0$ for the consistency error in Proposition~\ref{p:b-q}, we need to set
    \[
        h^{-2\overline s}(2\rho)^{-2n} \le Ch^{\beta}
    \]
    with some positive constant $C>1$. This implies that the quadrature order should be chosen to satisfy that
    \[
        n\ge \frac{(\beta+2\overline s)\log(1/h)-\log C}{2\log(2\rho)} .
    \]
\end{remark}
\begin{remark}[near-field consistency]
    Note that by the fact that $\tau'\in \mathcal S_\tau$ and that $\mathcal S_\tau$ is quasi-uniform, the error estimate we obtained in Proposition~\ref{p:b-q} improves the results from \cite[Theorem~10]{ainsworth2017aspects} by removing the factor $N$ (the number of degrees of freedom).
\end{remark}
\begin{remark}[a general setting for $s(x)$ and $a(x,y)$]\label{r:general}
    The above implementation for the matrix $\underline B$ can be also extended when $s(x)$ has jumps across the edges. Here we assume that for each triangle $\tau\in\mathcal T$, $s(\chi_\tau(\hat x))$ can be analytically extended to a complex neighborhood of the reference triangle $\hat\tau$, denoted by $\hat\tau^*$. Similar assumptions can be also applied to $a(.,y)$ and $a(x,.)$. Following the argument in \cite[Lemma~5.3.19]{sauter2010boundary}, we can show that $s(\chi_\tau\circ\mathfrak T_V^{(k)}(\zeta,\boldsymbol\eta))$ is analytic for  $\zeta\in\mathcal E_{\rho_1/h}$ with $\rho_1>0$ sufficiently small and is analytic for $\eta_\ell\in \mathcal E_{\rho_2}$ for some $\rho_2\in (\tfrac12, 1)$ with $\ell=1,2,3$. Under the above restriction for $\zeta$, we shall update the estimate \eqref{i:zeta} with
    \[
        \sup_{\substack{\zeta\in \mathcal E_\rho\\ \boldsymbol\eta\in[0,1]^3}}|a(x,y)(\Phi^{(k)}_{V,j}(\boldsymbol\eta)\Phi^{(k)}_{V,i}(\boldsymbol\eta))\zeta^{\frac{2\overline s-s(x)-s(y)}{4-2\overline s}}\eta_2| \lesssim h^{-\frac{\overline s-\underline s}{2-\overline s}} .
    \]
    This implies the new error estimate for approximating \eqref{e:int-v-2}
    \[
        \begin{aligned}
            |\mathcal A_{\tau,\tau'}(\psi_j,\psi_i) - Q_V^n(\psi_j,\psi_i)|
             & \lesssim \left(\frac{\rho_1}h\right)^{-2n}h^{2-2\overline s-\frac{\overline s-\underline s}{2-\overline s}} + h^{2-2\overline s}(2\rho_2)^{-2n} \\
             & \lesssim h^{2n+2-2\overline s-\frac{\overline s-\underline s}{2-\overline s}} +   h^{2-2\overline s}(2\rho_2)^{-2n} .
        \end{aligned}
    \]
    On the other hand, if we approximate \eqref{e:int-v-2}, we have
    \[
        |\mathcal A_{\tau,\tau'}(\psi_j,\psi_i) - Q_V^n(\psi_j,\psi_i)|
        \lesssim h^{2n+2\underline s-2\overline s-1} +   h^{2-2\overline s}(2\rho_2)^{-2n} .
    \]
    Here we note that for the first term on the right-hand side above, the convergence rate is lower than the previous approach and could even be negative when $n=1$.
    We can analogously analyze the quadrature for \cref{e:int-e,e:int-i-2} and follow the argument in Proposition~\ref{p:b-q} to obtain that
    \[
        |\mathcal B(U,V) - \mathcal Q^n_{\mathcal B}(U,V)|  \lesssim h^{2n-1-2\overline s-\frac{\overline s-\underline s}{2-\overline s}} +   h^{-2\overline s}(2\rho_2)^{-2n}.
    \]
    Hence, when choosing the quadrature order according to Remark~\ref{r:order} to achieve the rate $O(h^\beta)$, a sufficient condition is to set
    \[
        n\ge \frac12(\beta+1+2\overline s + \frac{\overline s-\underline s}{2-\overline s}) .
    \]
    In our numerical simulations in Section~\ref{s:numerics}, we set $n=1$ when assembling $\underline B$.
\end{remark}
%!TEX root = paper.tex

\section{\texorpdfstring{$\mathcal{H}^2$}{H2}-matrix approximation of the non-singular interactions}
\label{s:hmatrix}

We now consider the construction of a hierarchical matrix approximation of the second term in \eqref{e:decomp} so as to avoid the quadratic complexity that a direct computation would entail. The construction here is now dealing with a de-singularized kernel, since the singularities due to element-pairs that are touching have been resolved by the integrations of the previous section.

\subsection{Construction of the \texorpdfstring{$\mathcal{H}^2$}{H2} matrix structure}
One of the key approximations in hierarchical matrix representations involves clustering neighboring vertices and representing their net effect on other, sufficiently far-away, clusters by appropriate $2$-dimensional polynomials. Therefore the first step of the construction is to generate a hierarchy of spatial clusters for the mesh vertices. We do this by partitioning the interior vertices using a KD-tree, with repeated plane splits along coordinate directions. The construction is recursive starting from the whole point set as the topmost cluster. The points within each cluster are first sorted by projecting along the largest dimension of their bounding box. The sorted point clusters are then split along their median into two children clusters, with the recursion stopping when the cardinality of leaf clusters reaches a specified parameter $m$. This procedure produces a complete binary cluster tree $\mathscr T$ that has $L = 1 + \log(N/m)$ levels with leaves of size no larger than $m$.

The resulting cluster tree $\mathscr T$ together with an admissibility condition provides the structure and the starting point for constructing the $\mathcal H$-matrix approximation $\widetilde{\underline K}$ of $\underline K$.
Specifically, let $\sigma$ and $\sigma'$ be the vertex index sets for two clusters at the same level $\ell$ in $\mathscr T$ and $\omega_{\sigma}$ and $\omega_{\sigma'}$ their corresponding bounding boxes, respectively.
The matrix block $\widetilde{\underline{K}}_{\sigma,\sigma'}$ with rows $\sigma$ and columns $\sigma'$ may be represented as a single low rank approximation if the corresponding bounding boxes satisfy the admissibility condition
\begin{equation}\label{i:admiss}
    \max\{\diam(\omega_{\sigma}^e), \diam(\omega_{\sigma'}^e)\} \le \lambda_1\dist(\omega_{\sigma}^e,\omega_{\sigma'}^e)
\end{equation}
for some $\lambda_1 > 0$.
Here $\omega_{\sigma}^e$ and $\omega_{\sigma'}^e$ are the bounding-box extensions containing all elements in the support of the basis functions of nodes in $\sigma$ and $\sigma'$. This extends the elements in the clusters whose bounding boxes are $\omega_{\sigma}$ and $\omega_{\sigma'}$ by a band that is one-element wide (see Figure \ref{fig:h2a}), and therefore insures that no element pair ($\tau$, $\tau'$) from the two bounding boxes $\omega_{\sigma}^e$ and $\omega_{\sigma'}^e$ satisfying \eqref{i:admiss} involves singular integrals, and that the kernel $\gamma_{\mathcal{T}}(x, y) = \gamma(x, y)$ in this cluster pair. We denote by $\pfar$ the collection of the cluster pairs $(\sigma,\sigma')\in \mathscr T\times\mathscr T$ that satisfy the admissibility condition and describe the computation of their low rank approximation in section \ref{sec:pfar} below.

When the admissibility condition is not satisfied for clusters at the leaf level $L$, the entries in the blocks $\widetilde{\underline K}_{\sigma,\sigma'}$ are computed by direct numerical quadrature. Since we have already accounted for the singular integrals involving element pairs $(\tau, \tau')$ with $\tau \cap \tau' \ne 0$ directly in section 4, the integrals here involve only smooth integrands and we describe their computation in section \ref{sec:pnear} below. We will denote by $\pnear$ the set of leaf-level cluster pairs that violate the admissibility condition and are computed directly. They represent the non-singular smooth part of the near field. Hence, $\pfar\cup\pnear$ covers all the cluster pairs.

The construction of the hierarchical structure of the matrix is recursive and starts from the root of the matrix quadtree $\mathscr T\times\mathscr T$. At every level $l$, the pairs $(\sigma,\sigma')$ with $\sigma$ in the first (row) tree and $\sigma'$ in the second (column) tree are considered. If a pair satisfies the admissibility condition, a low rank approximation of it is constructed from a suitable a polynomial approximation of the kernel, and the corresponding matrix block is no longer subdivided. If the admissibility condition is violated, the children of $\sigma$ and $\sigma'$ at level $l+1$ are considered. The recursion terminates when the leaf level $L$ is reached. The blocks at that level with clusters that do not satisfy the admissibility condition are computed directly as described next; this includes the diagonal blocks with
$\sigma = \sigma'$.
\begin{figure}[ht]
    \vspace*{-4pt}
    \begin{center}
        \begin{subfigure}{0.49\textwidth}
            \begin{center}
                \includegraphics[width=\textwidth]{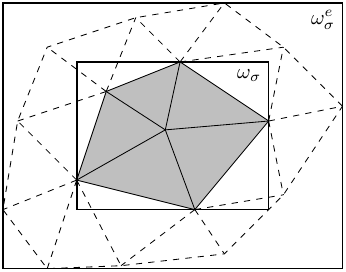}
            \end{center}
            \caption{}
            \label{fig:h2a}
        \end{subfigure}
        \hfill
        \begin{subfigure}{0.39\textwidth}
            \begin{center}
                \includegraphics[width=\textwidth]{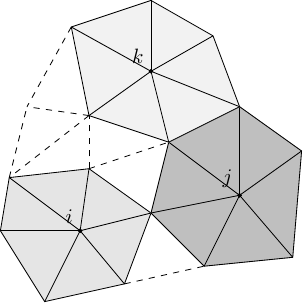}
            \end{center}
            \caption{}
            \label{fig:h2b}
        \end{subfigure}
    \end{center}
    \caption{$\mathcal{H}^2$ computations. (a) $\omega_{\sigma}^e$ extends the bounding box $\omega_{\sigma}$ of the shaded cluster $\sigma$ to include a band that is one-element wide. (b) Near-field entries: entry $A_{ik}$ sums contributions from the 36 elements pairs in $\mathcal{S}_i$ and $\mathcal{S}_k$, but entry $A_{ij}$ ignores the four vertex-touching pairs. }
    \label{fig:h2}
\end{figure}

\subsection{Direct computation of the smooth near-field} \label{sec:pnear}
Let us first consider the (non singular) near-field entries.
For the entry $\widetilde{\underline K}_{ij}$ for $i,j=1,\ldots,N$ with $i\in \sigma$, $j\in\sigma'$, and $(\sigma,\sigma')\in \pnear$, we have
\begin{equation}\label{e:k-near}
    \begin{aligned}
        \underline K_{ij} & = \sum_{\substack{\tau\subset\mathcal S_i, \tau'\subset\mathcal S_j     \\ \tau\cap \tau'=\emptyset}}
        \mathcal K_{\tau,\tau'}(\psi_j,\psi_i)                                                      \\
                          & = -2 \sum_{\substack{\tau\subset\mathcal S_i, \tau'\subset\mathcal S_j  \\ \tau\cap \tau'=\emptyset}} \int_\tau\int_{\tau'}\gamma(x,y)\psi_j(x)\psi_i(y)\diff y\diff x \\
                          & = -2  \sum_{\substack{\tau\subset\mathcal S_i, \tau'\subset\mathcal S_j \\ \tau\cap \tau'=\emptyset}}|J_\tau||J_{\tau'}|\int_{\hat\tau}\int_{\hat\tau} \gamma(\chi_\tau(\hat x),\chi_{\tau'}(\hat y))\hat\psi_j(\hat x)\hat\psi_i(\hat y) \diff \hat y\diff \hat x ,
    \end{aligned}
\end{equation}
where $\hat\psi_i$ are the corresponding shape functions defined on $\hat\tau$.
The sum is over non-touching element pairs in the support of $\psi_i$ and $\psi_j$ (see Figure \ref{fig:h2b}).
The computation for every such element pair (the integral on the right-hand side above) is based on a strategy similar to the one introduced in Section~\ref{s:direct}. Here we apply the Duffy coordinates $\mathfrak T : [0,1]^4\to \hat\tau\times\hat\tau$ satisfying that
\[
    \mathfrak T(\xi,\boldsymbol\eta) = (\xi,\xi\eta_1,\eta_2,\eta_2\eta_3) = (\hat x,\hat y).
\]
We then again apply the tensor-product Gaussian quadrature scheme with order $n$ to the transformed integral and denote by $\widetilde{\underline K}_{ij}$ the resulting approximation of $\underline K_{ij}$.

\subsection{Approximation of the far-field}
\label{sec:pfar}
Let's consider a matrix block $\widetilde{\underline K}_{\sigma,\sigma'}$ with $(\sigma,\sigma')\in\pfar$, which is represented as a low rank block. The $ij$-th entry of that block, $(\widetilde{\underline K}_{\sigma,\sigma'})_{ij}$ for $i\in \sigma$ and $j\in\sigma'$ may be written as:
\begin{equation}\label{e:k-far}
    \begin{aligned}
        (\widetilde{\underline K}_{\sigma,\sigma'})_{ij} & = \sum_{\substack{\tau\subset\mathcal S_i,\tau'\subset\mathcal S_j  \\ \tau\cap\tau'=\emptyset}}\widetilde{\mathcal K}_{\tau,\tau'}(\psi_j,\psi_i)\\
                                                         & := \sum_{\substack{\tau\subset\mathcal S_i,\tau'\subset\mathcal S_j \\ \tau\cap\tau'=\emptyset}}
        -2\int_\tau\int_{\tau'} I^p_{\sigma,\sigma'}\gamma(x,y)\psi_i(x)\psi_j(y)\diff y \diff x ,
    \end{aligned}
\end{equation}
where $I^{p}_{\sigma,\sigma'}$ is the nodal interpolant on the domain $\omega_{\sigma}^e\times\omega_{\sigma'}^e$ using a tensor-product Chebyshev polynomial of order $p$,
\begin{equation} \label{eq:interpolant}
    I^{p}_{\sigma,\sigma'}\gamma(x, y) = \sum_{\alpha=1}^p \sum_{\beta=1}^p \gamma(\xi_{\sigma, \alpha}, \xi_{\sigma', \beta}) \ell_{\sigma, \alpha}(x) \ell_{\sigma', \beta}(y) \quad \text{for } x \in \omega^e_\sigma, y \in \omega^e_{\sigma'}
\end{equation}
where $\{\xi_{\sigma,\alpha},\xi_{\sigma',\beta}\}$ are Chebyshev nodes and $\{\ell_{\sigma,\alpha}, \ell_{\sigma',\beta}\}$ are the corresponding $d$-dimensional Lagrange basis polynomials in $\omega^e_\sigma$ and $\omega^e_{\sigma'}$, respectively. This allows us to write the $ij$-th entry of $\widetilde{\underline K}_{\sigma,\sigma'}$ as
\begin{equation} \label{eq:triple_product}
    \sum_{\alpha=1}^p \sum_{\beta=1}^p
    \left[ \sum_{\substack{\tau\subset\mathcal S_i}} \int_\tau \psi_i(x) \ell_{\sigma, \alpha} (x) dx \right]
    \left[ \vphantom{\int} \! -2 \gamma(\xi_{\sigma, \alpha}, \xi_{\sigma', \beta}) \right]
    \left[ \sum_{\substack{\tau'\subset\mathcal S_j}} \int_{\tau'} \psi_j(y) \ell_{\sigma', \beta} (y) dy \right]
\end{equation}
which in factored form is given by
\begin{equation} \label{eq:triple_product_factored}
    (\widetilde{\underline K}_{\sigma,\sigma'})_{ij} =
    \sum_{\alpha=1}^p \sum_{\beta=1}^p
    \left( \underline{U}_\sigma \right)_{i\alpha} \left(\underline{S}_{\sigma, \sigma'} \right)_{\alpha \beta} \left(\underline{V}_{\sigma'}^T \right)_{\beta j} =
    \left( \underline{U}_\sigma \, \underline{S}_{\sigma, \sigma'} \, \underline{V}_{\sigma'}^T \right)_{ij}.
\end{equation}

The representation $\underline{U}_\sigma \, \underline{S}_{\sigma, \sigma'} \, \underline{V}_{\sigma'}^T$
is a rank-$p$ factorization of the $\widetilde{\underline K}_{\sigma,\sigma'}$ block. It is written in the bases $\underline{U}_\sigma$ and $\underline{V}_{\sigma'}$. These bases are of size ($\#\sigma \times p$)  and ($\#\sigma' \times p$) and
are common to all block rows $\sigma$ and block columns $\sigma'$ in $\widetilde{\underline K}$, respectively. Individual matrix blocks have their own small $\underline{S}_{\sigma, \sigma'}$ ($p \times p$) factors. Evaluation of the bases
can be done exactly using $\lceil\tfrac{p+1}2\rceil$-order Gaussian quadrature schemes since the integrands involved in their entries
are polynomials of order no more than $p+1$.

There is one final step needed to achieve linear overall complexity, since the approximation of the admissible blocks by the low rank approximation  $\underline{U}_\sigma \, \underline{S}_{\sigma, \sigma'} \, \underline{V}_{\sigma'}^T$
above would result in $O(N \log N)$ complexity. In order to remove the $\log N$ factor we can build \emph{nested} bases to avoid generating and storing $\underline{U}_\sigma$ and $\underline{V}_{\sigma'}$  explicitly for all levels of the hierarchy. This can be done by expressing the $p$ polynomial bases used in the approximation over a region $\omega^e_\sigma$ at level $l$ in terms of the approximating polynomials over the subregions of its children clusters in the cluster tree $\mathscr T$. In practice, this allows us to generate and store the bases $\underline{U}_\sigma$ and $\underline{V}_{\sigma'}$ explicitly at the leaf level only, with small inter-level transfer matrices that allow the implicit generation of the bases at coarser levels, recursively. This hierarchical (nested) basis of the hierarchically partitioned matrix is called the $\mathcal{H}^2$ representation and attains the optimal complexity \cite{borm2010efficient}. While asymptotically optimal, the thus constructed matrix does not generally have optimal constants, as it uses a generic polynomial basis for the construction. As a result, we are able to further compress the matrix algebraically and reduce the ranks of the matrix blocks and the overall memory footprint of the matrix. The details of this algebraic compression process and a demonstration of its effectiveness are described in \cite{boukaram2020hierarchical,H2Opus}.

\subsection{Consistency}

There are two approximation errors that need to be analyzed:
(i) the local quadrature error for $\mathcal K_{\tau,\tau'}(\psi_j,\psi_i)$ when $(\sigma,\sigma')\in \pnear$,
and (ii) the local interpolation error for $I_{\sigma,\sigma'}^p$ when $(\sigma,\sigma')\in \pfar$.
For $U,V\in \mathbb V(\mathcal T)$, we denote by $\widetilde{\mathcal K}^{\text{near}}(U,V)$ the bilinear form associated with the near-field part, i.e., the non-zero entries in \eqref{e:k-near}, and $\widetilde{\mathcal K}^{\text{far}}(U,V)$ the far-field part defined in \eqref{e:k-far}. So $\mathcal K(U,V)$ is approximated by
\[
    \mathcal K(U,V) \approx \widetilde K(U,V):= \widetilde{\mathcal K}^{\text{near}}(U,V) +   \widetilde{\mathcal K}^{\text{far}}(U,V) .
\]
We can also similarly decompose $\mathcal K(.,.)$ to the near-field part $\mathcal K^{\text{near}}(.,.)$ and the far-field part $\mathcal K^{\text{far}}(.,.)$ and we shall estimate their errors separately.

\subsubsection{Near-field}
In order to bound the error from the near-field part, we first note that the cardinality of $\pnear$ is uniformly bounded. Hence, the near-field entries in $\widetilde{\underline K}$ form a sparse matrix. In order to show this, we first note that thanks to the quasi-uniformity assumption on $\mathcal T$, for each leaf cluster $\sigma\in \mathscr T$, $\omega_\sigma$ is regular and satisfies that $|\omega_\sigma|\sim 2^{-l_{\max}}|\Oi|$. For $(\sigma,\sigma')\in\pnear$ and $x\in \omega_{\sigma}^e$ and $y\in\omega_{\sigma'}^e$, there holds
\begin{equation}\label{i:near-r}
    \begin{aligned}
        |x-y| & \le \dist(\omega_{\sigma}^e,\omega_{\sigma'}^e) + \diam(\omega_{\sigma}^e) + \diam(\omega_{\sigma'}^e) \\
              & < \bigg(2+\frac1{\lambda_1}\bigg)\max\{\diam(\omega_{\sigma}^e), \diam(\omega_{\sigma'}^e)\}           \\
              & \le C_1(2^{-l_{\max}/2} + h) =: r,
    \end{aligned}
\end{equation}
where for the second inequality above we used the fact that $(\sigma,\sigma')$ is non-admissible and where for the last inequality we applied the setting for the partition of the cluster tree so that for each leaf $\sigma\in \mathscr T$ there holds
\begin{equation}\label{i:near-vol}
    \diam(\omega_{\sigma}^e) \sim \diam(\omega_{\sigma}) + 2h \sim |\omega_{\sigma}|^{1/2} + 2h
    \sim (2^{-l_{\max}} |\Oi|)^{1/2} + 2h .
\end{equation}
The estimate \eqref{i:near-r} implies that given an index $i$ and a cluster $\sigma$ so that $\mathcal S_i\subset\omega_{\sigma}^e$, the union of the near-field leaves $\sigma$, namely $(\sigma,\sigma')\in\pnear$, is covered by $\Sigma = \cup_{x\in \omega_{\sigma}^e}B_r(x)$. Since $|\Sigma|\lesssim r^2 + |\omega_{\sigma}^e|\lesssim 2^{-l_{\max}} + h^2$, we then utilize \eqref{i:near-vol} to derive that
\[
    \#\{\sigma' : (\sigma,\sigma')\in \pnear\} \lesssim \frac{|\Sigma|}{|\omega_{\sigma}^e|}\lesssim \frac{2^{-l_{\max}} + h^2}{|\omega_{\sigma}^e|} \lesssim 1,
\]
where we note that the above hidden constant depends on $|\Oi|$, $\lambda_1$ as well as the quasi-uniformity constant.

Let $Q^n_K(\psi_j,\psi_i)$ denote the approximation of $K_{\tau,\tau'}(\psi_j,\psi_i)$ in Section~\ref{sec:pnear}. Following the argument from Section~\ref{ss:v} (see also \cite[Lemma~5.3.20 \& Theorem~5.3.24]{sauter2010boundary}), we have that when $h$ is sufficiently small,
\begin{equation}\label{i:err-quad}
    |\mathcal K_{\tau,\tau'}(\psi_j,\psi_i)-Q_{K}^n(\psi_j,\psi_i)| \lesssim h^{2-2\overline s}(2\rho)^{-2n},
\end{equation}
for some $\rho\in (\tfrac12, 1)$. Using the argument from Proposition~\ref{p:b-q} and applying the local error estimate \eqref{i:err-quad}, we can show the consistency of the near-field part by
\[
    |\mathcal K^{\text{near}}(U,V)-\widetilde{\mathcal K}^{\text{near}}(U,V)| \lesssim h^{-2\overline s}(2\rho)^{-2n} \|U\|_{L^2(\Oi)} \|V\|_{L^2(\Oi)}  .
\]

\subsubsection{Far-field}
For the consistency error from the far-field part, we first note that the error estimate for the kernel interpolation $I^p_{\sigma,\sigma'}$  \cite[Theorem~4.22 \& Remark~4.23]{borm2010efficient} (see also \cite[Lemma~5.1]{hackbusch2002h2})
\begin{equation}\label{i:err-interp}
    \begin{aligned}
        \|\gamma(x,y) - I^p_{\sigma,\sigma'}\gamma(x,y)\|_{L^\infty(\tau\times\tau')}
         & \lesssim c_1^p \|\gamma(x,y)\|_{L^\infty(\omega_{\sigma}^e\times\omega_{\sigma'}^e)}  \\
         & \lesssim \frac{c_1^p}{\dist(\omega_{\sigma}^e,\omega_{\sigma'}^e)^{2+2\overline s}} .
    \end{aligned}
\end{equation}
where $c_1=\min\{\tfrac{c_0\lambda_1}{c_0\lambda_1+1},\tfrac{c_0\lambda_1}{2}\}$ for some $c_0>0$ and where the hidden constant depends only on $p$ and $d$. We follow the proof of \cite[Theorem~7.3.18]{sauter2010boundary} to obtain that
\begin{equation}\label{i:err-far}
    |\mathcal K^{\text{far}}(U,V)-\widetilde{\mathcal K}^{\text{far}}(U,V)| \lesssim c_1^p h^{-2-2\overline s} \|U\|_{L^2(\Oi)} \|V\|_{L^2(\Oi)} .
\end{equation}

\subsubsection{Overall error}
Gathering \eqref{i:err-quad} and \eqref{i:err-far} gives the consistency error for the $\mathcal H$-matrix approximation.
\begin{proposition}[consistency for the $\mathcal H^2$-approximation]\label{p:k}
    For $U,V\in \mathbb V(\mathcal T)$, let $\widetilde K(U,V)$ be the approximation of $\mathcal K(U,V)$ by the $\mathcal H^2$-approximation using Chebyshev polynomials with order $p$ together with the admissibility condition \eqref{i:admiss}. A tensor-product Gaussian quadrature rule with order $n$ is used to compute the near-field part of the $\mathcal H$-matrix; see \eqref{e:k-far}. Then there exist constants $\rho\in (\tfrac12, 1]$ and $c_1\in (0,1)$ so that
    \[
        |\mathcal K(U,V)-\widetilde{\mathcal K}(U,V)| \lesssim h^{-2\overline s} (c_1^p h^{-2} + (2\rho)^{-2n}) \|U\|_{L^2(\Oi)} \|V\|_{L^2(\Oi)} .
    \]
\end{proposition}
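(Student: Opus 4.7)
The plan is to split the error additively as
\[
|\mathcal{K}(U,V)-\widetilde{\mathcal{K}}(U,V)|
\le |\mathcal{K}^{\text{near}}(U,V)-\widetilde{\mathcal{K}}^{\text{near}}(U,V)|
+ |\mathcal{K}^{\text{far}}(U,V)-\widetilde{\mathcal{K}}^{\text{far}}(U,V)|,
\]
and to bound each piece separately, combining them at the end. The near-field part has essentially been prepared in the excerpt, so most of the effort will go into the far-field bookkeeping.

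For the near-field contribution, I would rely on the local quadrature estimate \eqref{i:err-quad} together with the sparsity already established via \eqref{i:near-r}--\eqref{i:near-vol}: each leaf cluster $\sigma$ interacts with only $O(1)$ non-admissible partners $\sigma'$. Writing $U=\sum_j u_j \psi_j$ and $V=\sum_i v_i \psi_i$, I would reproduce verbatim the Cauchy-Schwarz bookkeeping that drives the proof of Proposition~\ref{p:b-q}: bound $\sum_{i,j\in\mathcal{I}(\tau,\tau')}|u_j v_i|$ by $h^{-d}\|U\|_{L^2(\tau\cup\tau')}\|V\|_{L^2(\tau\cup\tau')}$ via $\#\mathcal{I}(\tau,\tau')\lesssim 1$, sum over $(\tau,\tau')$ using shape regularity and one more Cauchy-Schwarz, and then absorb the factor $h^{2-2\overline s}$ from \eqref{i:err-quad} against the $h^{-d}$ from counting. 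In dimension $d$ this yields $h^{-2\overline s}(2\rho)^{-2n}\|U\|_{L^2(\Oi)}\|V\|_{L^2(\Oi)}$, which is exactly one of the two terms in the statement.

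For the far-field contribution, I would first collect the entry-wise error from a single admissible pair $(\sigma,\sigma')\in\pfar$. On each $\tau\subset\mathcal{S}_i$, $\tau'\subset\mathcal{S}_j$ inside the block,
\[
|\mathcal{K}_{\tau,\tau'}(\psi_j,\psi_i)-\widetilde{\mathcal{K}}_{\tau,\tau'}(\psi_j,\psi_i)|
\le 2\,\|\gamma-I^p_{\sigma,\sigma'}\gamma\|_{L^\infty(\tau\times\tau')}\int_\tau\int_{\tau'}|\psi_i(x)\psi_j(y)|\diff y\diff x,
\]
and I would insert \eqref{i:err-interp} together with the admissibility condition \eqref{i:admiss} to control $\dist(\omega_\sigma^e,\omega_{\sigma'}^e)^{-d-2\overline s}$ by a constant multiple of $\diam(\omega_\sigma^e)^{-d-2\overline s}$. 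The main obstacle, which is precisely the point where I would invoke \cite[Theorem~7.3.18]{sauter2010boundary}, is to sum these entry-wise errors cleanly over all $(\sigma,\sigma')\in\pfar$ without losing a logarithmic factor: one uses the fact that at each level of $\mathscr{T}\times\mathscr{T}$ the admissible blocks form a disjoint covering of the cluster-pair region, so the double sum telescopes across levels and the level-wise geometric factor $\diam(\omega_\sigma^e)^{-d-2\overline s}$ combines with the volumes $|\tau||\tau'|$ and the Cauchy-Schwarz reduction $\sum |u_j v_i|\lesssim h^{-d}\|U\|_{L^2}\|V\|_{L^2}$ to produce a bound of the form $c_1^p h^{-d-2\overline s}\|U\|_{L^2(\Oi)}\|V\|_{L^2(\Oi)}$.

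Finally I would add the near- and far-field bounds, factor out $h^{-2\overline s}$, and read off the statement. The only substantive technical difficulty is the level-by-level summation in the far-field step; everything else is either a direct application of Proposition~\ref{p:b-q}'s machinery or a routine $L^\infty$-to-$L^2$ conversion using the $O(h^{-d})$ control of nodal coefficients by the $L^2$ norm of the finite element function.
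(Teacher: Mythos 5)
Your proposal follows essentially the same route as the paper: split into near- and far-field, bound the near-field by combining the local quadrature estimate \eqref{i:err-quad} with the $O(1)$ count of non-admissible leaf pairs and the Cauchy--Schwarz bookkeeping of Proposition~\ref{p:b-q}, and bound the far-field by inserting the interpolation estimate \eqref{i:err-interp} and summing over admissible blocks via \cite[Theorem~7.3.18]{sauter2010boundary}. The paper's argument is exactly this (with the same implicit $d=2$ reading of the exponent $h^{2-d-2\overline s}$ in the near-field step), so the proposal is correct and not materially different.
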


\begin{remark}[general settings for $s(x)$ and $a(x,y)$]
    The consistency result above relies on the analyticity of $s(x)$. For the case where there are jumps across element edges, the strategy for showing exponential convergence for the $\mathcal H^2$-approximation versus polynomial degree will no longer hold since the $I_{\sigma,\sigma'}^p$ is not well-defined. A more refined argument, following the approach in \cite[Section 9.2 and Theorem 9.5]{borm2010efficient}, is needed to establish that the low rank approximation indeed allows the operator and solution errors to retain an exponential convergence with the rank/degree. The technical details are, however, beyond the scope of the present work.
\end{remark}

%!TEX root = paper.tex

% \section{Fast multipole approximation of kernel-weighted mass}
% \section{Acceleration for weighted mass assembling}
\section{Fast multipole acceleration for computing the weighted mass matrix}
\label{s:fmm}

We finally consider the construction of the matrix approximation $\underline M$ of the third term in \eqref{e:decomp}. $\underline M$ has  the footprint of a mass matrix but is weighted by a global density function whose direct computation would require an expensive $O(N^2)$ computation.

\subsection{Element computations}
For each cell $\tau\in\mathcal T^{\text{int}}$, we shall first compute the local contributions $\underline M^\tau$ defined in \eqref{e:mass-l} by the $n$-th order tensor-product quadrature scheme used in \eqref{e:k-near} but only for $x$. We use this scheme primarily for the convenience of having the same code and analysis as the previous sections.
Letting $i,j=1,2,3$ be the indices of the local shape functions in $\tau$, the quadrature scheme leads to
\begin{equation}\label{e:tau-quad}
    \underline M^\tau(\tau)_{i,j} = \int_{\tau}\psi_j(x)\psi_i(x)\rho_{\mathcal T}(x)\diff x
    \approx \sum_{\ell=1}^{N_\tau} \psi_j(q_\ell)\psi_i(q_\ell)\rho_{\mathcal T}(q_\ell) w_\ell
\end{equation}
with $N_\tau$ denoting the number of quadrature points and $\mathcal Q_\tau := \{q_\ell\}_{\ell=1}^{N_\tau}$ and $\mathcal W_\tau := \{w_\ell\}_{\ell=1}^{N_\tau}$ are the quadrature points and weights. Denote the collection of all the quadrature points for $\tau\in\mathcal T^{\text{int}}$ by $\mathcal Q^{\text{int}}$, \ie
\[
    \mathcal Q^{\text{int}} := \bigcup_{\tau\in\mathcal T^{\text{int}}} \mathcal Q_\tau .
\]
We similarly define $\mathcal W^{\text{int}}$ for the quadrature weights. We first show that evaluation of $\rho_{\mathcal T}(q)$ for every $q\in \mathcal Q^{\text{int}}$ requires $O(N)$ operations. Recalling the definition of $\rho_{\mathcal T}(x)$ from \eqref{e:m}, we write the computation for $q_i\in \mathcal Q^{\text{int}}$ as
\[
    \rho_{\mathcal T}(q_i) = \sum_{\tau'\in \mathcal T}
    \int_{\tau'} \gamma_{\mathcal T}(q_i,y) \diff y .
\]
We shall again approximate the right-hand side above by quadrature. Denoting $\mathcal Q^{\text{ext}}$ and $\mathcal W^{\text{ext}}$ the set of quadrature points and weights for $\int_{\Oe}\gamma(q_i,y)\diff y$, we set $\mathcal Q = \mathcal Q^{\text{int}}\cup \mathcal Q^{\text{ext}}$ and $\mathcal W = \mathcal W^{\text{int}}\cup \mathcal W^{\text{ext}}$. Using the quadrature scheme generated by $\mathcal Q$ and $\mathcal W$, we have
\begin{equation}\label{e:tgt-sum}
    \rho_{\mathcal T}(q_i) \approx \sum_{q_j\in \mathcal Q}\gamma_{\mathcal T}(q_i,q_j)  w_j .
\end{equation}
Assuming a suitable subdivision of $\mathcal T$ so that the cardinality $\#(\mathcal Q)$ is $O(N)$, the computation of the right-hand side above obviously requires $O(N)$ operations.

In principle, we can use the kernel-independent fast multipole method \cite{ying2004kernel} to accelerate the evaluations of $\rho_{\mathcal T}(q_i)$ for all $q_i\in \mathcal Q^{\text{int}}$. Here we consider \eqref{e:tgt-sum} as a $N$-body problem by treating $\mathcal Q$, $\mathcal Q^{\text{int}}$ and $\mathcal W$ as source points, target points and source densities, respectively.
For the numerical simulation, one could use available fast multipole open source libraries such as \texttt{exafmm} \cite{wang2021exafmm}, \texttt{PVFMM} \cite{malhotra2015pvfmm}, or \texttt{PBBFMM3D} \cite{darve21}. Unfortunately, these libraries only support a kernel with constant order and constant diffusion coefficients. Our alternative solution is to interpret the general fast multipole method as a hierarchical matrix-vector product in the $\mathcal H^2$ format \cite{yokota14} and again use an $\mathcal H^2$-approximation as we describe below.

\subsection{An \texorpdfstring{$\mathcal H^2$}{H2}-approximation for density evaluation}\label{ss:fmm-h}
We introduce a collocation approach using an $\mathcal H^2$-matrix to compute $\rho_{\mathcal T}(q_i)$ for all $q_i\in\mathcal Q^{\text{int}}$. To this end, denote the space $\mathbb V_s$ the span of Dirac delta distributions for the source points $\mathcal Q$, namely
\[
    \mathbb V_s := \text{span}\{\delta_q : q\in \mathcal Q\}  .
\]
Here $\delta_q$ denotes the Dirac delta distribution at $q$. We similarly define the space $\mathbb V_t$ for the target points $\mathcal Q^{\text{int}}$. We first consider the following rectangular matrix
\[
    (\underline K_\rho)_{ij} = \gamma_{\mathcal T}(q_i,q_j) = \int_\Omega\int_\Omega \gamma_{\mathcal T}(x,y)\delta_{q_i}(x)\delta_{q_j}(y)\diff y\diff x ,\quad \text{for } q_i\in\mathcal Q^{\text{int}} \text{ and }  q_j\in \mathcal Q .
\]
Letting $\underline\rho =  (\rho_{\mathcal T}(q_i))_{q_i\in \mathcal Q^{\text{int}}}^T$ and $\underline w:= (w_j)_{q_j\in \mathcal Q}^T$, we have
\begin{equation}\label{eq:matvec}
    \underline\rho = \underline K_\rho \underline w .
\end{equation}

Therefore, in order to generate the necessary density values at all quadrature points we need to generate an $\mathcal H^2$-approximation of $\underline K_\rho$ and perform the multiplication in \eqref{eq:matvec} efficiently.  The $\mathcal H^2$-approximation algorithm for $\underline K_\rho$ is similar to the one presented in Section~\ref{s:hmatrix} for $\underline K$ and starts by constructing two cluster trees $\mathscr T_t$ and $\mathscr T_s$ for the row index set for $\mathbb V_t$ and the column index set for $\mathbb V_s$, respectively. When building the $\mathcal H^2$-matrix, we do not need to extend the bounding boxes
for the clusters $\sigma \in \mathscr T_t$ and $\sigma' \in \mathscr T_s$; consequently, we may define the interpolation operator $I_{\sigma,\sigma'}^p$ for $\gamma(x,y)$ in $\omega_{\sigma}\times\omega_{\sigma'}$.
The admissibility condition is given by
\begin{equation}\label{i:addmi-q}
    \max\{\diam(\omega_{\sigma}), \diam(\omega_{\sigma'})\} \le \lambda_2\dist(\omega_{\sigma},\omega_{\sigma'}) .
\end{equation}
for some fixed $\lambda_2 >0$. We similarly define $\pfar$ to be the collection of all the admissible blocks $(\sigma,\sigma')$ and define $\pnear$ to be the rest of the blocks. We shall further assume that
$\lambda_2$ is small enough to guarantee that $\dist(\omega_{\sigma},\omega_{\sigma'}) \ge 3h $ so that $\gamma_{\mathcal T}(x,y) = \gamma(x,y)$, and is therefore smooth. Hence, the interpolation $I_{\sigma,\sigma'}^p \gamma_{\mathcal T}$ makes sense for $(\sigma,\sigma')\in \pfar$ and satisfies that
\begin{equation}\label{i:err-h-quad}
    \|\gamma_{\mathcal T}(x,y) - I^p_{\sigma,\sigma'}\gamma_{\mathcal T}(x,y)\|_{L^\infty(\omega_{\sigma}\times\omega_{\sigma'})}
    \lesssim c_2^p \|\gamma(x,y)\|_{L^\infty(\omega_{\sigma}\times\omega_{\sigma'})} \lesssim c_2^p h^{-2-2\overline s},
\end{equation}
where $c_2 = \min\{\tfrac{c_0\lambda_2}{c_0\lambda_2+1},\tfrac{c_0\lambda_2}{2}\}$.

Once the matrix $\underline K_\rho$ is constructed, the matrix-vector multiplication in \eqref{eq:matvec} can be performed via standard multilevel methods for $\mathcal{H}^2$ matrices which involve a pair of upward and downward passes over the basis trees and multiplication by the small low rank blocks at all levels of the hierarchy. The operation can be done in $O(N)$ (cf. \cite{boukaram19a}).

\subsection{Consistency}
Given $U,V\in \mathbb V(\mathcal T)$, let us first denote  the quadrature approximation of $\mathcal M(U,V)$ by $\mathcal Q_{\mathcal M}^n(U,V)$. We also denote by $\mathcal Q_{\mathcal M}^{n,p}(U,V)$ the resulting bilinear form when $\{\rho_{\mathcal T}(q_i)\}_{q_i\in \mathcal Q^{\text{int}}}$ are approximated using the $\mathcal H^2$ matrix-vector product. Here we recall that $p$ is the degree of the Chebyshev polynomials. By the triangle inequality, the consistency error between $\mathcal M(U,V)$ and $\mathcal Q_{\mathcal M}^{n,p}(U,V)$ can be bounded with
\begin{equation}\label{i:tri-quad}
    |\mathcal M(U,V) - \mathcal Q_{\mathcal M}^{n,p}(U,V)|
    \le |\mathcal M(U,V) - \mathcal Q_{\mathcal M}^{n}(U,V)|
    +|\mathcal Q_{\mathcal M}^{n}(U,V) - \mathcal Q_{\mathcal M}^{n,p}(U,V)|
\end{equation}

We first estimate the quadrature error, namely the first error on the right-hand side above. Set $e^{ij}_\tau$ to be the error of the quadrature approximation in \eqref{e:tau-quad}, following the argument in Section~\ref{ss:v}, we have that there exists a constant $\rho\in (\tfrac12, 1]$ so that
\[
    |e^{ij}_\tau|\lesssim (2\rho)^{-2n} h^2\|\rho_{\mathcal T}\|_{L^\infty(\mathbb R^2)}
    \lesssim (2\rho)^{-2n} h^{2-2\overline s} .
\]
Here we note for the last inequality we used the fact that
\[
    \|\rho_{\mathcal T}\|_{L^\infty(\mathbb R^2)}
    \lesssim \int_{ B_{1}(0)\backslash B_{\varepsilon h}(0)} \frac{1}{|y|^{2+2\overline s}}\diff y
    + \int_{\mathbb R^2\backslash B_{1}(0)} \frac{1}{|y|^{2+2\underline s}}\diff y
    \lesssim h^{-2\overline s} ,
\]
where $B_{r}(0)$ is a ball entered at origin with radius $r$ and where $\varepsilon>0$ is sufficiently small. Using the above local error estimate, we again follow the same argument in Proposition~\ref{p:b-q} to derive that
\begin{equation}\label{i:quad}
    |\mathcal M(U,V) - \mathcal Q_{\mathcal M}^{n}(U,V)| \lesssim  (2\rho)^{-2n} h^{-2\overline s} .
\end{equation}

In order to estimate the error from the $\mathcal H^2$-approximation for $\rho_{\mathcal T}$, we let $\sigma\in \mathscr T_t$ and $\sigma'\in \mathscr T_s$.
For $q_i\in \mathcal Q^{\text{int}}$, let $\widetilde \rho_{\mathcal T}(q_i)$ be the resulting approximation of $\rho_{\mathcal T}(q_i)$. We invoke the interpolation error estimate \eqref{i:err-h-quad} as well as $w_j\sim h^2$ to bound the error
\[
    \begin{aligned}
        |\widetilde \rho_{\mathcal T}(q_i) - \rho_{\mathcal T}(q_i)|
         & \le \sum_{\substack{\sigma\in\mathscr T_t                   \\ i\in \sigma}}\sum_{\substack{\sigma'\in\mathscr T_s\\(\sigma,\sigma')\in P^{\text{far}}}} \sum_{j\in \sigma'}|\gamma_{\mathcal T}(q_i,q_j) -  I^p_{\sigma,\sigma'}\gamma_{\mathcal T}(q_i,q_j)|w_j                      \\
         & \lesssim h^2 c_2^{p}  \sum_{\substack{\sigma\in\mathscr T_t \\ i\in \sigma}}\sum_{\substack{\sigma'\in\mathscr T_s\\(\sigma,\sigma')\in P^{\text{far}}}} \sum_{j\in \sigma'}\|\gamma_{\mathcal T}\|_{L^\infty(\omega_{\sigma}\times\omega_{\sigma'})} \lesssim c_2^p h^{-2-2\overline s} ,
    \end{aligned}
\]
where for the last inequality we used the fact the number of far-field indices $j$ is bounded by $N\sim h^{-2}$.
This means that the $\mathcal H^2$-approximation for $\rho_{\mathcal T}(q_\ell)$ leads to the quadrature formula in \eqref{e:tau-quad} perturbed by the error $Cc_2^p h^{-2\overline s}$. Thus we again apply the argument in Proposition~\ref{p:b-q} to obtain that
\begin{equation}\label{i:quad-h}
    |\mathcal Q_{\mathcal M}^{n}(U,V) - \mathcal Q_{\mathcal M}^{n,p}(U,V)| \lesssim  c_2^p h^{-2-2\overline s} \|U\|_{L^2(\Oi)} \|V\|_{L^2(\Oi)} .
\end{equation}
Gathering the errors \eqref{i:quad} and \eqref{i:quad-h} into \eqref{i:tri-quad}, we conclude that
\begin{proposition}[consistency for $\mathcal M(U,V)$]\label{p:m}
    For $U,V\in\mathbb V(\mathcal T)$, let $\mathcal Q_{\mathcal M}^{n,p}(U,V)$ be the resulting approximation of $\mathcal M(U,V)$ by quadrature with order $n$ as well as $\mathcal H^2$-approximation for the quadrature points. Then there exists a constant $\rho\in (\tfrac12,1]$ and $c_2\in (0,1)$ so that
    \[
        |\mathcal M(U,V) - \mathcal Q_{\mathcal M}^{n,p}(U,V)| \lesssim  ((2\rho)^{-2n} + c_2^p h^{-2}) h^{-2\overline s} \|U\|_{L^2(\Oi)} \|V\|_{L^2(\Oi)} .
    \]
\end{proposition}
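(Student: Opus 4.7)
The plan is to start from the triangle inequality \eqref{i:tri-quad} that splits $|\mathcal M(U,V) - \mathcal Q_{\mathcal M}^{n,p}(U,V)|$ into a pure quadrature error $|\mathcal M(U,V) - \mathcal Q_{\mathcal M}^{n}(U,V)|$ and a density-approximation error $|\mathcal Q_{\mathcal M}^{n}(U,V) - \mathcal Q_{\mathcal M}^{n,p}(U,V)|$, and bound each term separately by a factor of $\|U\|_{L^2(\Oi)}\|V\|_{L^2(\Oi)}$ times the claimed rate.

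For the first term, I would mimic the local analysis of Section~\ref{ss:v} applied to the smooth integral $\int_{\tau}\psi_j\psi_i\rho_{\mathcal T}\,dx$. The integrand is analytic on $\tau$ in each coordinate direction, so the derivative-free bound \eqref{i:q-err} yields a local quadrature error of order $(2\rho)^{-2n}$ times $h^d\|\rho_{\mathcal T}\|_{L^\infty(\Omega)}$. The key auxiliary estimate is the global bound $\|\rho_{\mathcal T}\|_{L^\infty(\Omega)}\lesssim h^{-2\overline s}$. This is obtained by splitting $\rho_{\mathcal T}(x)=\int_{\Omega\setminus\mathcal S_\tau}\gamma(x,y)\,dy$ into an annular region $B_1(x)\setminus B_{\varepsilon h}(x)$ and a tail $\Omega\setminus B_1(x)$, and using \eqref{i:s} together with the analyticity of $s$ to bound the singular integral in the annulus by $h^{-2\overline s}$ and the tail by $O(1)$. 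Summing the local errors over $\tau\in\mathcal T^{\text{int}}$ and applying the Cauchy--Schwarz step from the proof of Proposition~\ref{p:b-q} then produces \eqref{i:quad}.

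For the second term, the strategy is to first establish a pointwise bound on $|\widetilde\rho_{\mathcal T}(q_i)-\rho_{\mathcal T}(q_i)|$ inherited from the $\mathcal H^2$-approximation, and then propagate it through the weighted mass matrix. For every admissible block $(\sigma,\sigma')\in P^{\text{far}}$ the kernel interpolation estimate \eqref{i:err-h-quad} gives an $L^\infty$ error of $c_2^p h^{-d-2\overline s}$; summing over all source quadrature indices $j\in\sigma'$ in the far field, combined with the quadrature weights $w_j\sim h^d$ and the fact that the total source count is $O(h^{-d})$, yields the aggregate pointwise bound $|\widetilde\rho_{\mathcal T}(q_i)-\rho_{\mathcal T}(q_i)|\lesssim c_2^p h^{-d-2\overline s}$. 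Treating this as a perturbation of the density in the element-wise formula \eqref{e:tau-quad} and repeating the Cauchy--Schwarz argument of Proposition~\ref{p:b-q}, with the perturbed density in place of $\|\rho_{\mathcal T}\|_{L^\infty}$, delivers \eqref{i:quad-h}. Combining the two bounds via \eqref{i:tri-quad} gives the stated estimate.

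The main obstacle I anticipate is twofold. First, the $L^\infty$ bound on $\rho_{\mathcal T}$ must carefully track the dependence on the lower cutoff $\varepsilon h$ so that the $h^{-2\overline s}$ blow-up rate is sharp and is driven by the largest fractional order $\overline s$; keeping $\overline s$ (rather than $\underline s$) in the near singularity and $\underline s$ in the tail is essential. Second, the counting of admissible source indices has to exploit that the global sum $\sum_{j}w_j\sim|\Omega|$ is bounded, so that the factor $h^{-d}$ appears only once in the aggregate pointwise error; naïvely bounding the $L^\infty$ norm of $\gamma_{\mathcal T}$ on every admissible block by $h^{-d-2\overline s}$ and summing would over-count. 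Once these two ingredients are in place, the remaining Cauchy--Schwarz machinery is identical to that in Proposition~\ref{p:b-q}.
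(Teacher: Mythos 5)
Your proposal follows the paper's proof essentially step for step: the same triangle-inequality split into the pure quadrature error and the $\mathcal H^2$ density-approximation error, the same bound $\|\rho_{\mathcal T}\|_{L^\infty}\lesssim h^{-2\overline s}$ via the annulus/tail decomposition with $\overline s$ near the singularity and $\underline s$ in the tail, the same pointwise estimate $|\widetilde\rho_{\mathcal T}(q_i)-\rho_{\mathcal T}(q_i)|\lesssim c_2^p h^{-d-2\overline s}$ obtained from \eqref{i:err-h-quad} combined with $w_j\sim h^d$ and the $O(h^{-d})$ source count, and the same propagation of both local errors through the Cauchy--Schwarz machinery of Proposition~\ref{p:b-q}. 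The "obstacles" you flag are precisely the two points the paper handles in its displayed computations, so the argument is correct and matches the paper's.
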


\subsection{Overall consistency}
We conclude this section with the following theorem by combining the consistency error estimates from \Cref{p:b-q,p:k,p:m}. For simplicity, we will use the same $n$-th order tensor-product Gaussian quadrature to approximate the integral and
the same polynomial degree $p$ for the $\mathcal H$-matrices in \Cref{s:hmatrix,s:fmm}.
\begin{theorem}[total error]\label{t:const}
    For $U,V\in\mathbb V(\mathcal T)$, define the final approximation of $\mathcal A(U,V)$ by
    \[
        \mathcal A_h(U,V) := \mathcal Q_{\mathcal B}^n(U,V) +\widetilde K(U,V) +  \mathcal Q_{\mathcal M}^{n,p}(U,V)
    \]
    where the bilinear forms on right-hand side are defined in \Cref{p:b-q,p:k,p:m}, respectively. Then there holds that
    \[
        |\mathcal A(U,V) - \mathcal A_h(U,V)| \lesssim h^{-2\overline s}((2\rho)^{-2n} + h^{-2} c_1^p + h^{-2} c_2^p)   \|U\|_{L^2(\Oi)} \|V\|_{L^2(\Oi)} .
    \]
\end{theorem}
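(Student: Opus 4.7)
The plan is to reduce Theorem \ref{t:const} to a direct combination of the three consistency estimates already established in Propositions \ref{p:b-q}, \ref{p:k}, and \ref{p:m}. The essential ingredient is the exact decomposition \eqref{e:decomp}, which splits $\mathcal A(U,V)$ into the sum of the singular near-field form $\mathcal B(U,V)$, the non-singular interaction form $\mathcal K(U,V)$, and the weighted mass form $\mathcal M(U,V)$; since the definition of $\mathcal A_h(U,V)$ mirrors this splitting by replacing each piece with its computable surrogate, the whole error naturally decomposes along the same lines.

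First, I would write
\[
    \mathcal A(U,V) - \mathcal A_h(U,V) = \bigl(\mathcal B(U,V) - \mathcal Q^n_{\mathcal B}(U,V)\bigr) + \bigl(\mathcal K(U,V) - \widetilde{\mathcal K}(U,V)\bigr) + \bigl(\mathcal M(U,V) - \mathcal Q_{\mathcal M}^{n,p}(U,V)\bigr),
\]
so that the triangle inequality immediately yields three terms to control. Each of these three terms has already been bounded in the previous sections: Proposition \ref{p:b-q} handles the first by $h^{-2\overline s}(2\rho)^{-2n}\|U\|_{L^2(\Oi)}\|V\|_{L^2(\Oi)}$, Proposition \ref{p:k} handles the second by $h^{-2\overline s}(c_1^p h^{-d} + (2\rho)^{-2n})\|U\|_{L^2(\Oi)}\|V\|_{L^2(\Oi)}$, and Proposition \ref{p:m} handles the third by $h^{-2\overline s}((2\rho)^{-2n} + c_2^p h^{-d})\|U\|_{L^2(\Oi)}\|V\|_{L^2(\Oi)}$. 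Summing these three bounds produces at most a constant-multiple overhead on the $(2\rho)^{-2n}$ factor from absorbing three copies, and gathers the polynomial terms $c_1^p h^{-d}$ and $c_2^p h^{-d}$ unchanged, giving exactly the stated estimate.

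A small book-keeping point is that the three propositions are each stated with a potentially different constant $\rho \in (\tfrac12,1)$ (resp. $\rho \in (\tfrac12,1]$) arising from the associated ellipse of analytic extension in the derivative-free quadrature error analysis. To combine them into a single $\rho$ in the conclusion, I would simply take the minimum of the three admissible values; since the bound $(2\rho)^{-2n}$ is monotone in $\rho$, the smallest one dominates the other two and the same symbol can be reused without loss. Similarly, the admissibility parameters $\lambda_1$ and $\lambda_2$ yield the two distinct contraction rates $c_1,c_2 \in (0,1)$, which appear independently in the final estimate and need not be merged.

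The argument is therefore essentially bookkeeping on top of the already-proved propositions, so I do not anticipate any real obstacle; the only subtle point is confirming that each proposition is applied with the same mesh smallness hypothesis on $h$ (so that the analytic extensions in \eqref{i:q-err} remain valid for the Gauss quadrature and so that $|d^{(k)}(\boldsymbol\eta)| \le 1$ in the singular integration) and that the polynomial degree $p$ and quadrature order $n$ are taken uniformly across the three components, as emphasized in the paragraph preceding the theorem.
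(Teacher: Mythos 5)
Your proposal is correct and matches the paper's approach exactly: the paper gives no separate proof of Theorem~\ref{t:const}, stating it as an immediate combination of Propositions~\ref{p:b-q}, \ref{p:k}, and \ref{p:m} via the exact splitting \eqref{e:decomp} and the triangle inequality, which is precisely what you do. Your extra bookkeeping on taking the smallest of the three values of $\rho$ (valid since $(2\rho)^{-2n}$ is decreasing in $\rho$) and keeping $c_1$, $c_2$ distinct is a sensible and correct elaboration of what the paper leaves implicit.
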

% \input{algorithm}  
%!TEX root = paper.tex

\section{Numerical illustrations}
\label{s:numerics}

In this section, we present numerical examples to illustrate the performance of our proposed finite element algorithm. In particular, we  report the decay of $L^2(\Oi)$ errors with respect to a sequence of the quasi-uniform meshes as the mesh size $h$ is systematically reduced, and the increase in computational cost as $N$ increases to verify the linear complexity of the algorithm. Our numerical implementation is based on the \texttt{Deal.II} (version~9.4) finite element library \cite{deal.II} which supports simplex meshes and the \texttt{H2Opus} library \cite{H2Opus} for hierarchical matrices. We use the \texttt{TimerOutput} class in \texttt{deal.II} to record the computation time.
Solutions are obtained by a conjugate gradient solver. No attempt was made to fine tune algorithmic parameters, nor to parallelize or optimize the code, which was executed on a single core of a standard-issue laptop computer.

In the construction of the hierarchical matrix approximations for constructing $\underline K$ and $\underline M$, we use the slightly more convenient geometric admissibility condition $\lambda \| C_\sigma - C_{\sigma'} \| \ge (D_{\sigma} + D_{\sigma'})/2$ where $C$ and $D$ refer to the center and diameter of the bounding box ($\omega$ or $\omega^e$) used for a cluster. We use $\lambda=0.75$, leaf size $m=128$, and approximate the kernel function $\gamma_{\mathcal T}$ using degree 10 Legendre polynomials. This guarantees that the $\mathcal H$-matrix approximation does not dominate the total approximation error. We use $3$-point Gaussian quadrature (namely the quadrature order $n=1$) to compute element integrals when assembling $\underline K$ and $\underline M$.  We also point out that since \texttt{H2Opus} currently supports square matrices only, we expand the target space $\mathbb V_t$ to $\mathbb V_s$ for the quadrature evaluations by the $\mathcal H^2$-approximation mentioned in Section~\ref{ss:fmm-h}, but only use  the subset of values of the matrix-vector product that correspond to interior quadrature points.

\subsection{Tests for the integral fractional Laplacian}
We first consider a classical fractional diffusion problem involving the integral fractional Laplacian, namely $a(x,y)\equiv 1$,  the order function $s(x) \equiv s$ is a constant in $(0,1)$, and the exterior domain $\Oe=\mathbb R^2\backslash\Oi$. So the solution $u$ satisfies $u\in \widetilde H^s(\Oi)$ and
\begin{equation}\label{e:int-f}
  \int_{\mathbb R^2}\int_{\mathbb R^2} \frac{(\widetilde u(x)-\widetilde u(y))(\widetilde v(x) - \widetilde v(y))}{|x-y|^{2+2s}} \diff y\diff x = \int_\Oi f v\diff x,
  \quad\text{for all } v\in \widetilde H^s(\Oi),
\end{equation}
where $\widetilde .$ denotes the zero extension from $\Oi$ to $\mathbb R^2$ and the fractional Sobolev space
\[
  \widetilde H^s(\Oi) := \{v\in L^2(\Oi) : \widetilde v\in H^s(\mathbb R^2)\} .
\]

\subsubsection{An extra step}
One bottleneck in generating a linear finite approximation for the above problem is to deal with the integral on the unbounded domain $\mathbb R^2\backslash\Oi$. Here we borrow the assembling strategy from \cite{acosta2017short} and briefly introduce the implementation below. We set an auxiliary triangulation $\mathcal T_B$ for a ball $\Omega_B$ centered at the origin with radius $R$ and containing the triangulation of $\Oi$. We set $R$ large enough so that the distance between $\Oi$ and $\partial \Omega_B$ is strictly positive. This guarantees that the patch $\mathcal S_\tau$ for each cell $\tau$ in $\Oi$ is contained in $B$. Whence, we follow \Cref{s:direct,s:hmatrix} exactly to assemble $\underline B$ and $\underline K$. To compute $\underline M$, according to \eqref{e:m}, we can split the discrete bilinear form $\mathcal M(.,.)$ as
\[
  \begin{aligned}
    \mathcal M(U,V) & = 2\sum_{\tau\in \mathcal T^{\text{int}}}\int_\tau U(x)V(x)\bigg(\int_{\Omega_B\backslash\mathcal S _\tau} \gamma(x,y)\diff y\bigg) \diff x                                 \\
                    & \qquad + 2\sum_{\tau\in \mathcal T^{\text{int}}}\int_\tau U(x)V(x)\bigg(\underbrace{\int_{\mathbb R^2\backslash \Omega_B} \gamma(x,y)\diff y}_{:=\rho_B(x)}\bigg) \diff x .
  \end{aligned}
\]
Denote $\underline M^\text{in}$ and $\underline M^{\text{out}}$ the associated \emph{weighted mass} matrices for the two bilinear forms on the right-hand side of the equation above. We apply the fast multipole approximation technique of \Cref{s:fmm} to $\underline M^{\text{in}}$. For $\underline M^{\text{out}}$, we use the fact that $\rho_B$ is radial in $\mathbb R^2\backslash\Omega_B$ and thus rewrite $\rho_B$ in polar coordinates with (cf. \cite[Section~A.5]{acosta2017short})
\begin{equation}\label{e:rho}
  \rho_B(x) = \frac1{2s} \int_0^{2\pi} \frac1{t(\theta ,x)^{2s}} \diff \theta,
\end{equation}
where
\[
  t(\theta ,x) := \sqrt{\mu^2 + R^2 - |x|^2} - \mu, \quad\text{with } \mu=x_1\cos\theta + x_2\sin\theta .
\]
Thus when assembling $\underline M^{\text{out}}$ by using quadrature formulas for each $\tau\in \Oi$, we evaluate $\rho_B$ at each quadrature point by approximating the integral in \eqref{e:rho} by numerical integration. Here we use a $9$-point Gaussian quadrature formula. The stiffness matrix corresponding to the weak problem \eqref{e:int-f} is now decomposed into four sub-matrices, \ie
\[
  \underline A = \underline B + \underline K + \underline M^{\text{in}} + \underline M^{\text{out}}.
\]

\subsubsection{Simulation and results}
We set $\Oi$ to be the unit ball. For the auxiliary ball $\Omega_B$, we set its radius $R=1.1$. We shall test the convergence of the finite element approximation by using the well-known analytic solution
\[
  u(x) = \frac{2^{-2s}}{\Gamma(1+s)\Gamma(1+s)} (1-|x|^2)^s,\quad\text{in }\Oi
\]
so that $f=1$ in $\Oi$. Starting from a coarse grid $\mathcal T_1$ for $\Omega_B$, we generate a sequence for meshes $\{\mathcal T_j\}_{j=1}^6$ by refining the mesh globally. We set $\overline s = 0.9$ for the computation of the sub-matrix $\underline B$.

The left panel of \Cref{f:disk} reports the $L^2(\Oi)$-error between $u$ and its finite element approximation $U_j$ against the number of degrees of freedom when $s=0.7$. The slope of the log-log error plot implies that $U_j$ converges to $u$ in the first order, which is the optimal rate that can be reached and is limited only by the reduced regularity of the solution itself, which has singular derivatives at the boundary.
The right panel of \Cref{f:disk} reports the CPU time for assembling the sub-matrices $\underline B$, $\underline K$ and $\underline M^{\text{in}}$, respectively. As the number of degrees of freedom increases, we observe a linear complexity for all three assembly routines.

\begin{figure}[ht]
  \centering
  \includegraphics[width=\textwidth]{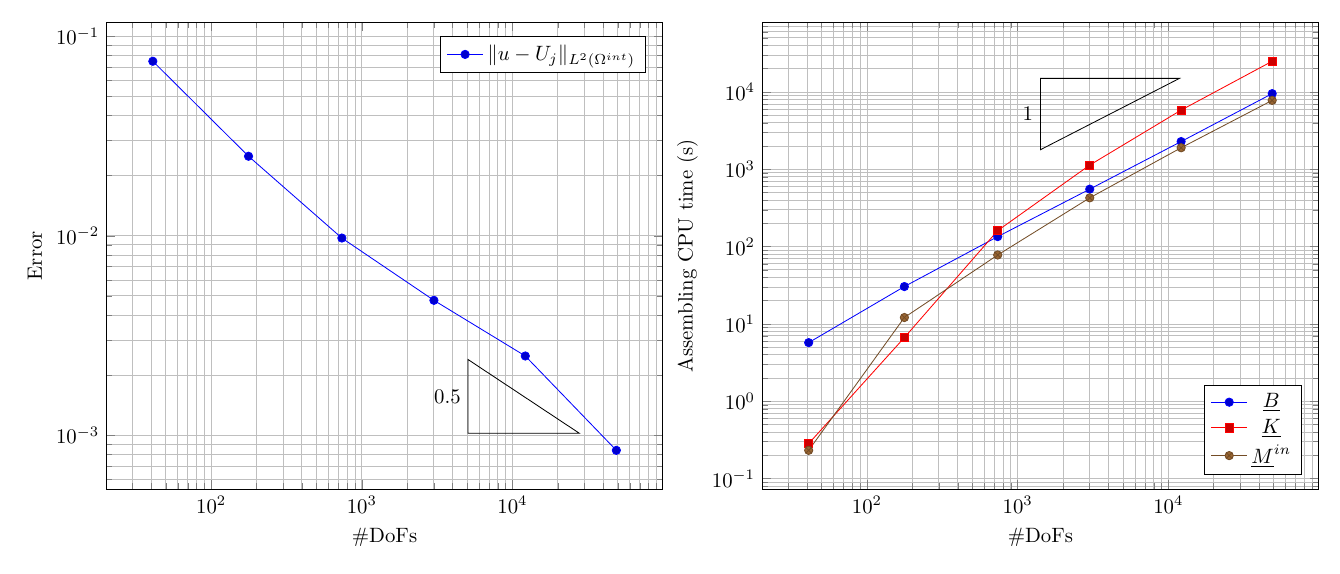}
  \caption{Integral fractional Laplacian test: (left) $L^2$-error between $u$ and $U_j$ against the degrees of freedoms ($\#\text{DoFs}$) and (right) CPU times for each assembly routines versus degrees of freedoms $\#\text{DoFs}$. Linear complexity in time for all three assembly routines is confirmed.}
  \label{f:disk}
\end{figure}

\Cref{f:h-structure} depicts the tree structure of the $\mathcal H$-matrix for $\underline K$ (left panel). Here the blocks in red are computed directly while the blocks in green are approximated by the low-rank matrices based on the Lagrange interpolation. The right plot of \Cref{f:h-structure} illustrates the tree structure of the $\mathcal H$-matrix that is used to compute the density function $\rho_{\mathcal T}$; see Section~\ref{s:fmm} for details. Note that the matrix on the right is larger than the one on the left because it includes all interior and exterior degrees of freedom and we are using a $3$-point Gaussian quadrature for the density function $\rho_{\mathcal T}$ in \eqref{e:m}.

\begin{figure}[H]
  \centering
  \begin{tabular}{cc}
    \hspace*{-10pt}
    \includegraphics[width=0.5\textwidth]{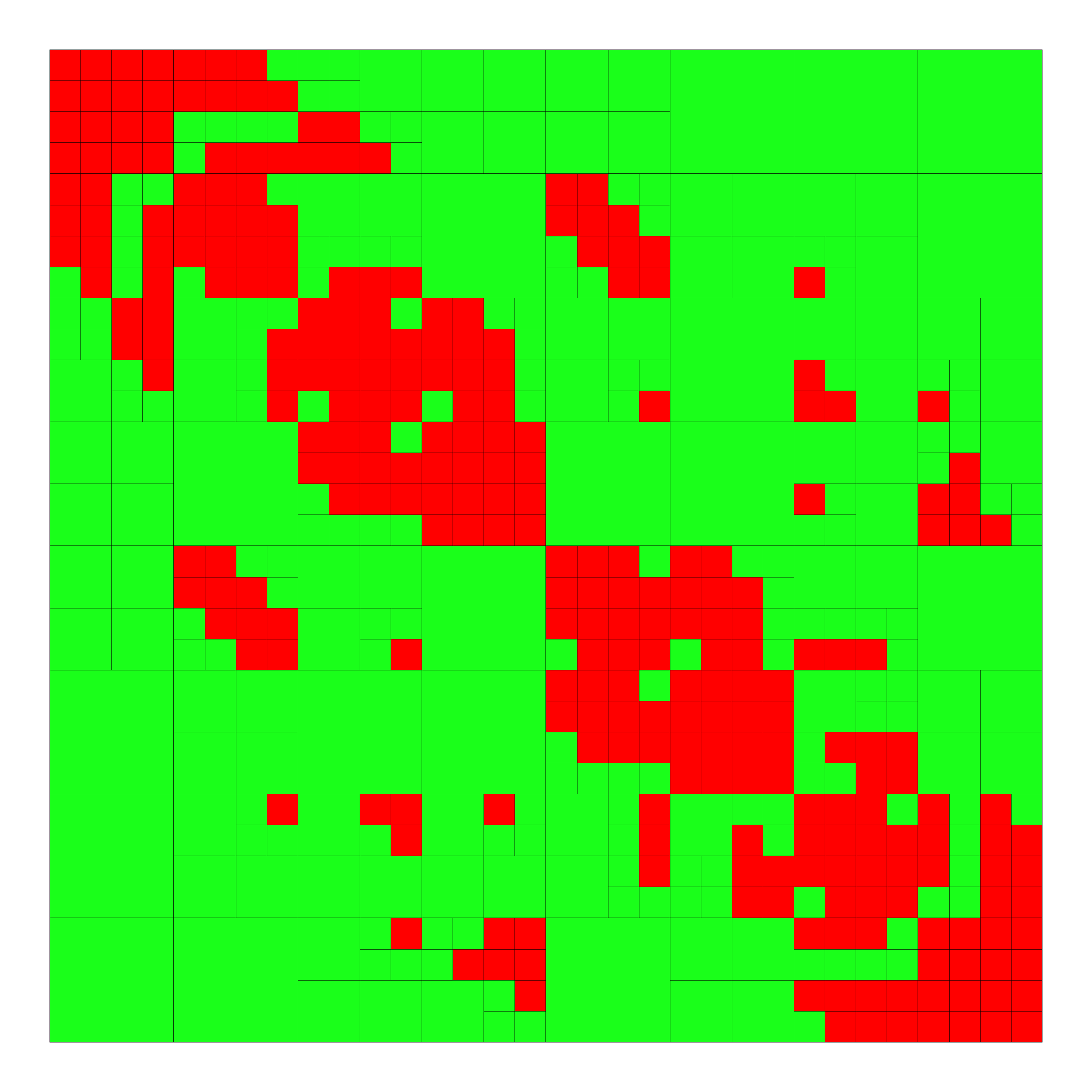} &
    \hspace*{-10pt}
    \includegraphics[width=0.5\textwidth]{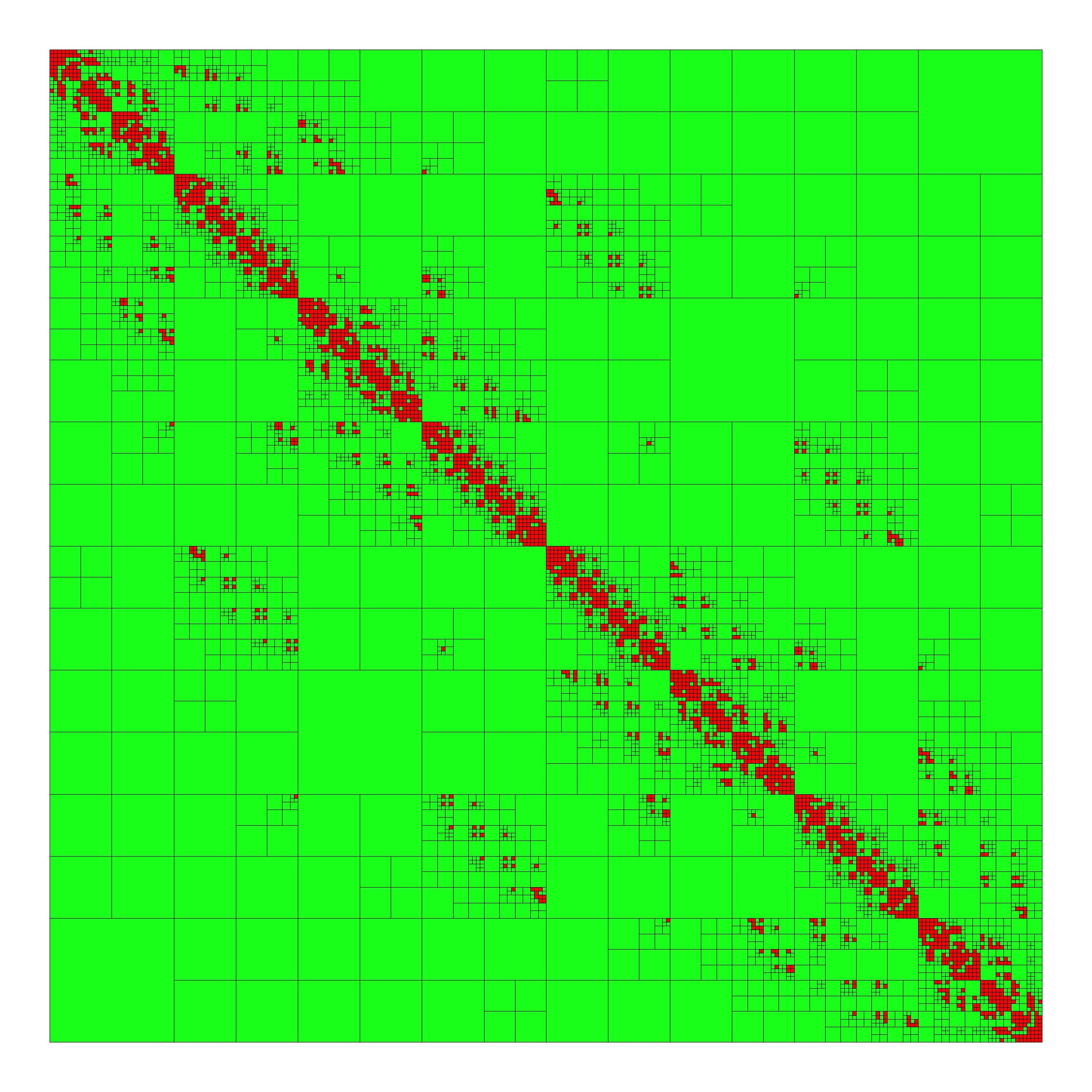}
  \end{tabular}
  \caption{Integral fractional Laplacian test: (left) $\mathcal{H}^2$-matrix structure of $\underline K$ for the mesh $\mathcal T_4$ and (right) the corresponding $\mathcal{H}^2$-matrix structure for the quadrature evaluations for $\rho_{\mathcal T}$.  Matrix blocks in red are computed directly while the blocks in green are approximated by low-rank factorizations based on polynomial Lagrange interpolation of the kernel.}
  \label{f:h-structure}
\end{figure}

\subsection{Tests for variable order}
Next we test our algorithm with variable-order FDEs. We let constant $s^*\in (\tfrac12 ,1)$ be the so-called \emph{background order} in $\Omega$. This means that the variable order function $s(x) = s^*$ in $\Oe$. In $\Oi$, we consider a tensor product \emph{bump} function that is supported on a square in $\Oi$ and centered at the point $(x^c_1,x^c_2)$ with the size $\ell$. Specifically, we let
\begin{equation}\label{e:variable-order}
  s(x) = s^* + \eta \times \textrm{bump}(x_1-x_1^c, \ell)\times\textrm{bump}(x_2-x_2^c;\ell)        ,
\end{equation}
where
\[
  \textrm{bump}(x;\ell) =
  \left\{
  \begin{aligned}
     & \exp(-\frac{1}{1-r^2}),\, r=\frac{2x}{\ell}, &  & |r|<1,   \\
     & 0,                                           &  & |r|\ge 1 \\
  \end{aligned}
  \right.
\]
and $\eta$ is a fixed constant satisfying that $s(x)\in [\tfrac12,1)$. For simplicity, we fix the diffusion coefficient $a(x,y)\equiv 1$.

\begin{figure}[ht]
  \begin{center}
    \begin{tabular}{ccc}
      \includegraphics[width=0.3\textwidth]{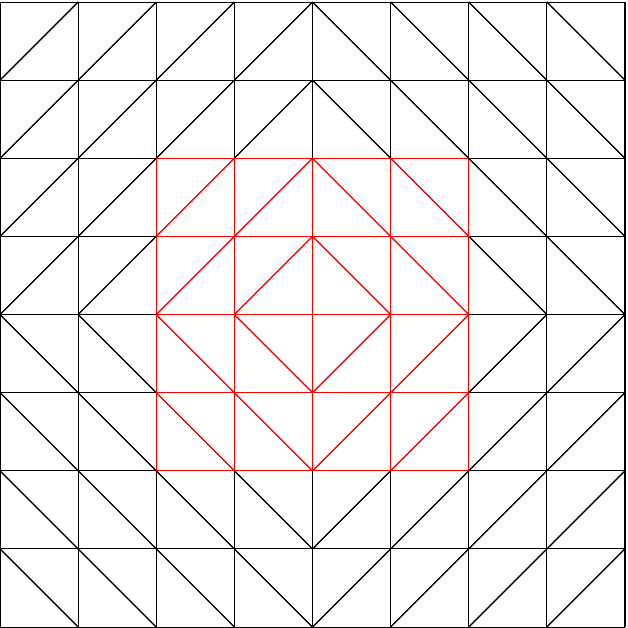} &
      \phantom{AAAA}                                         &
      \includegraphics[width=0.4\textwidth]{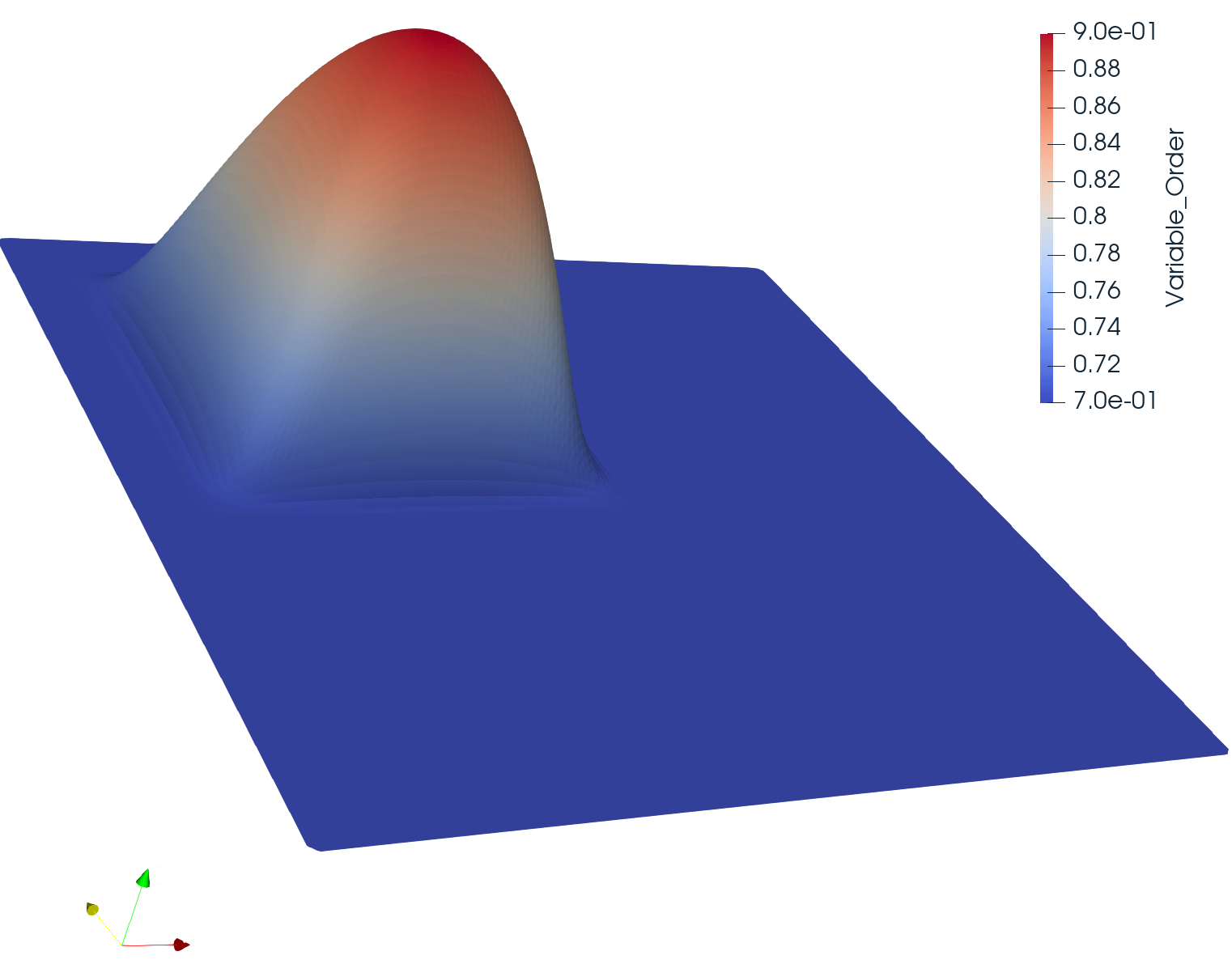}
    \end{tabular}
  \end{center}
  \caption{(Left) the coarsest uniform grid for the domain $\Omega = (-2,2)^2$. The grid for the interior domain $\Oi = (-1,1)^2$ is marked in red. (Right) variable order function in $\Oi$ with $s^* =0.7$, $\eta=0.2$, $\ell = 1.0$ and $x^c=(-0.4,0.4)$.}
  \label{f:grid}
\end{figure}

Our computational domain $\Omega$ is set to be a square $(-2,2)^2$ and the interior domain $\Oi$ is set to be $(-1,1)^2$. We construct a sequence of uniform grids $\{\mathcal T_j\}_{j=1}^6$ with the mesh size $h_j = \sqrt{2}/j^{j+1}$ generated by globally refining the coarse grid $\mathcal T_1$; see the coarsest grid $\mathcal T_1$ in Figure~\ref{f:grid}.

\begin{remark}\label{r:quasi-uniform-test}
  We note that the subdivision in $\Oe$ mainly contributes to the assembly of matrix $\underline M$ by computing the density function $\rho_{\mathcal T}(x)$ as in \eqref{e:m}. By utilizing the decay property of $\gamma(x,y)$, graded meshes in $\Oe$ can be used in order to reduce the computational cost. This strategy will be explored in future work. Here we rely on quasi-uniform meshes in $\Oe$ to guarantee that the error from $\rho_{\mathcal T}$ does not affect the total error.
\end{remark}

For spatially varying fractional order, we do not have analytic solutions to examine the rate of convergence of our numerical scheme. As an alternative, we perform a comparison test by computing the difference between the finite element solution $U_j$ on $\mathcal T_j$ and a finite difference approximation $\widetilde U_j$ developed in \cite{alzahrani2022space} using a cartesian grid $\widetilde{\mathcal T}_j$ with the same node locations. We also perform self-convergence tests.

\begin{figure}[ht]
  \centering
  \includegraphics[width=\textwidth]{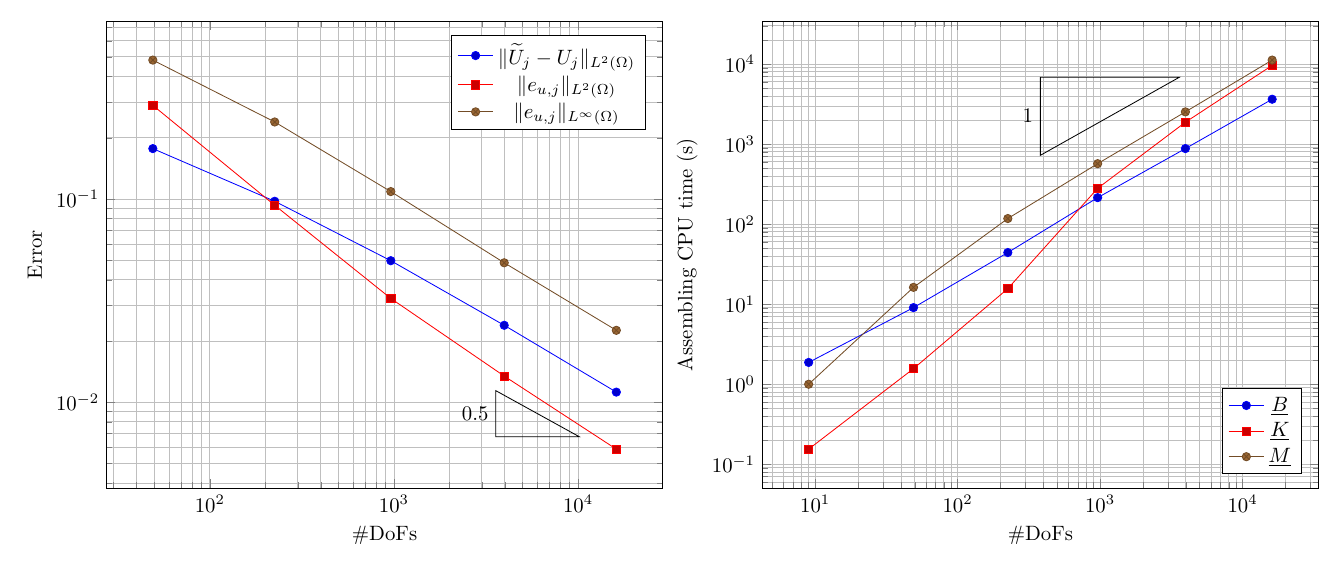}
  \caption{Variable-order comparison test: (left) $L^2$-error between $u$ and $U_j$ against degrees of freedoms $\#\text{DoFs}$ and the self-convergence of $U_j$ in $L^2$ and $L^\infty$ norms; (right) CPU times for assembling routines versus degrees of freedoms $\#\text{DoFs}$. Linear complexity in time is confirmed.}
  \label{f:comparsion}
\end{figure}

\enlargethispage*{15pt}

The left plot of Figure~\ref{f:comparsion} reports the $L^2(\Oi)$-error decay between $U_j$ and $\widetilde U_j$ for $f\equiv 20$ and for the variable order function $s(x)$ in \eqref{e:variable-order} with $s^* = 0.7$, $\eta = 0.2$, $\ell=1.0$, and $(x_c,y_c) = (-0.4,0.4)$; see Figure~\ref{f:grid}. We also report the self-convergence of $U_j$, namely the error $e_{u,j} = U_{j+1}-U_j$, in both $L^2(\Oi)$ and $L^\infty(\Oi)$ norms, by estimating the error on each grid by using the next finer grid as the reference solution.
It is seen that all three errors exhibit a first order decay.
Such error behavior is similar to the constant-order integral fractional Laplacian case when $s \ge 1/2$, and 
might be expected here since $\underline{s} \ge 1/2$. 
We also observe the singular behavior of the solution at $\partial\Oi$; see the approximate solution along $x_1=-0.4$ in the right plot of Figure~\ref{f:comparsion-sol}. 
% We have to point out that the regularity of the solution up to the boundary is still unclear to us and it belongs to our future investigation.
The right panel of Figure~\ref{f:comparsion} shows the performance of the assembly routines for matrices $\underline B$, $\underline K$ and $\underline M$. We again observe a linear complexity in time for each assembly procedure, though the time for $\underline M$ is relatively large due the quasi-uniform triangulation of $\Omega$ as mentioned in Remark~\ref{r:quasi-uniform-test}.

\begin{figure}[H]
  \begin{center}
    \begin{tabular}{lr}
      \includegraphics[width=0.4\textwidth]{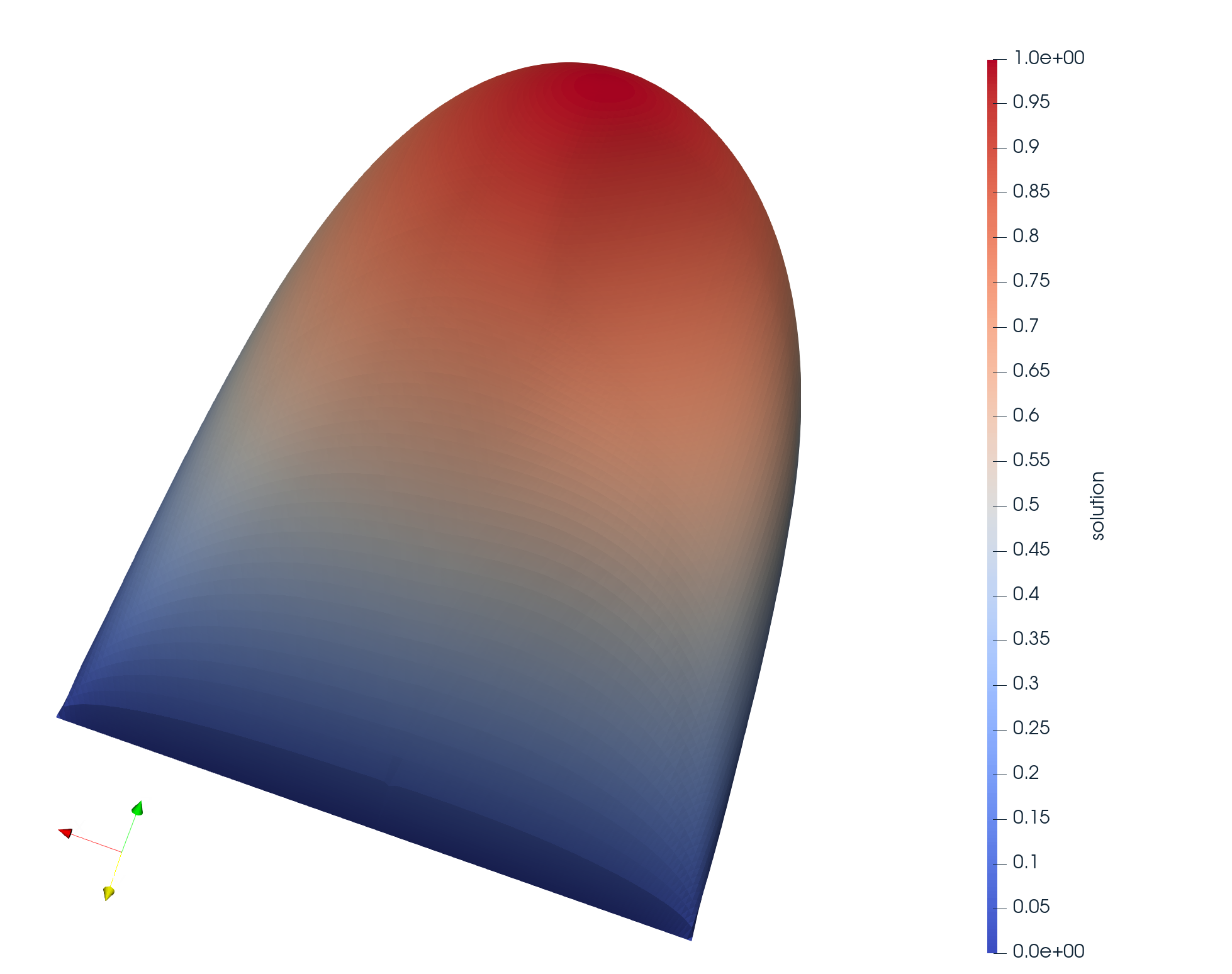} &
      \includegraphics[width=0.35\textwidth]{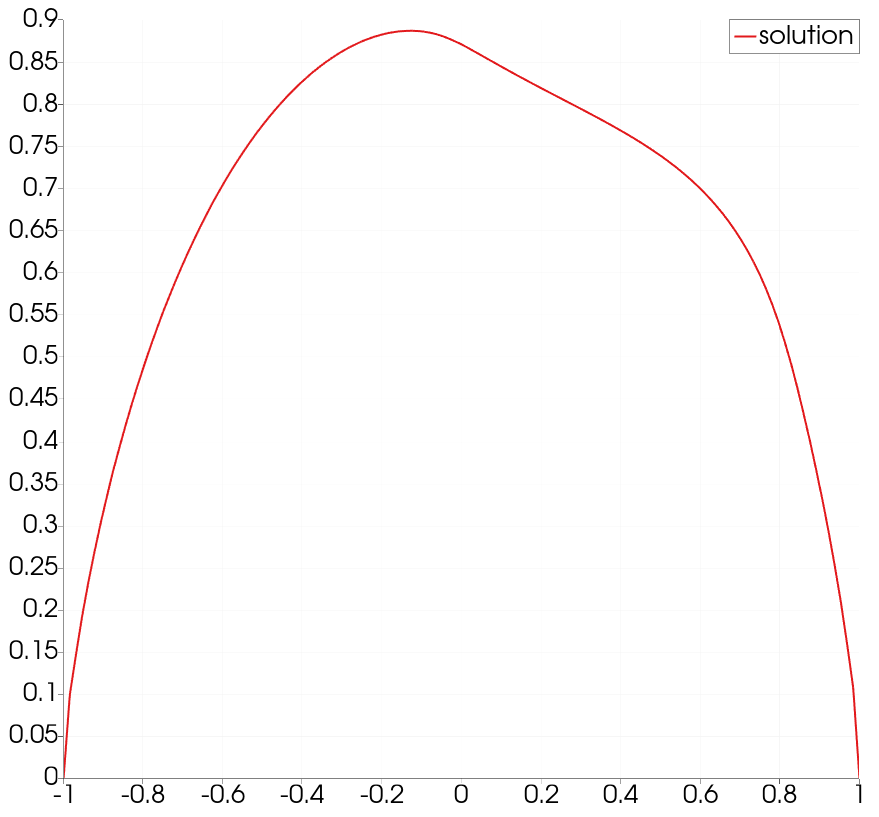}
    \end{tabular}
  \end{center}
  \caption{Variable-order comparison test: (left) the finite element approximation $U_6$ ($\#\text{DoFs} = 16129$) and (right) the approximation along $x_1=-0.4$.}
  \label{f:comparsion-sol}
\end{figure}

\Cref{f:comparsion-sol} shows the finite element approximation $U_6$ with $16129$ degrees of freedom in $\Oi$ ($131072$ cells total in $\Omega$). In the left panel we see a faster diffusion rate in the variable-order region $[-0.9,0.1]\times[-0.1,0.9]$. This diffusive behavior of the solution can also be observed along the diagonal $x_2=-0.4$ shown in the right panel of \Cref{f:comparsion-sol}. 

In \Cref{f:comparsion-negative}, we test the same problem using the same parameters except that the coefficient bump function is negative, namely $\eta=-0.2$. As shown in the left panel, we again obtain first-order convergence in $L^2(\Oi)$ when comparing against the solution obtained from the finite difference method. In the right panel, we instead observe a less diffusive behavior of the solution in the bump region. 
Figure~\ref{fig:condition} displays the estimated condition number of the system (computed from the CG iterates) against the number of degrees of freedom. We observe that the condition number is nearly $O(h^{-2s^*})$, where $s^*$ denotes the background order.

\begin{figure}[ht]
  \begin{tabular}{p{0.5\textwidth} p{0.5\textwidth}}
    \vspace{0pt} \includegraphics[width=0.49\textwidth]{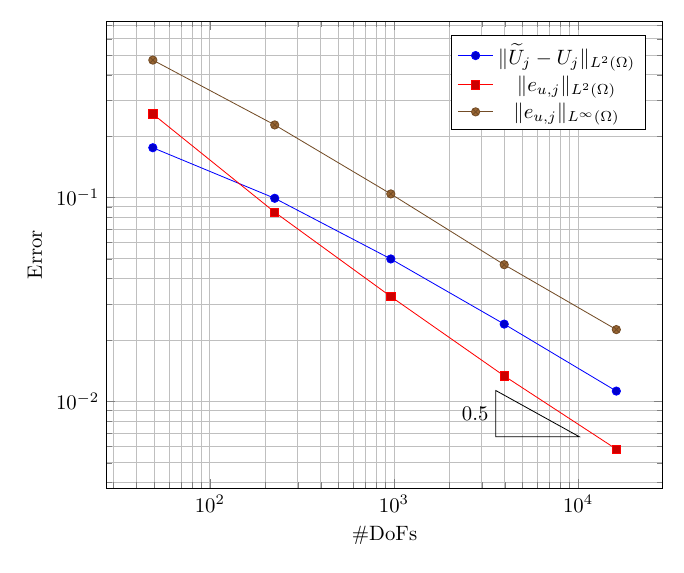} &
    \vspace{0pt} \includegraphics[width=0.4\textwidth]{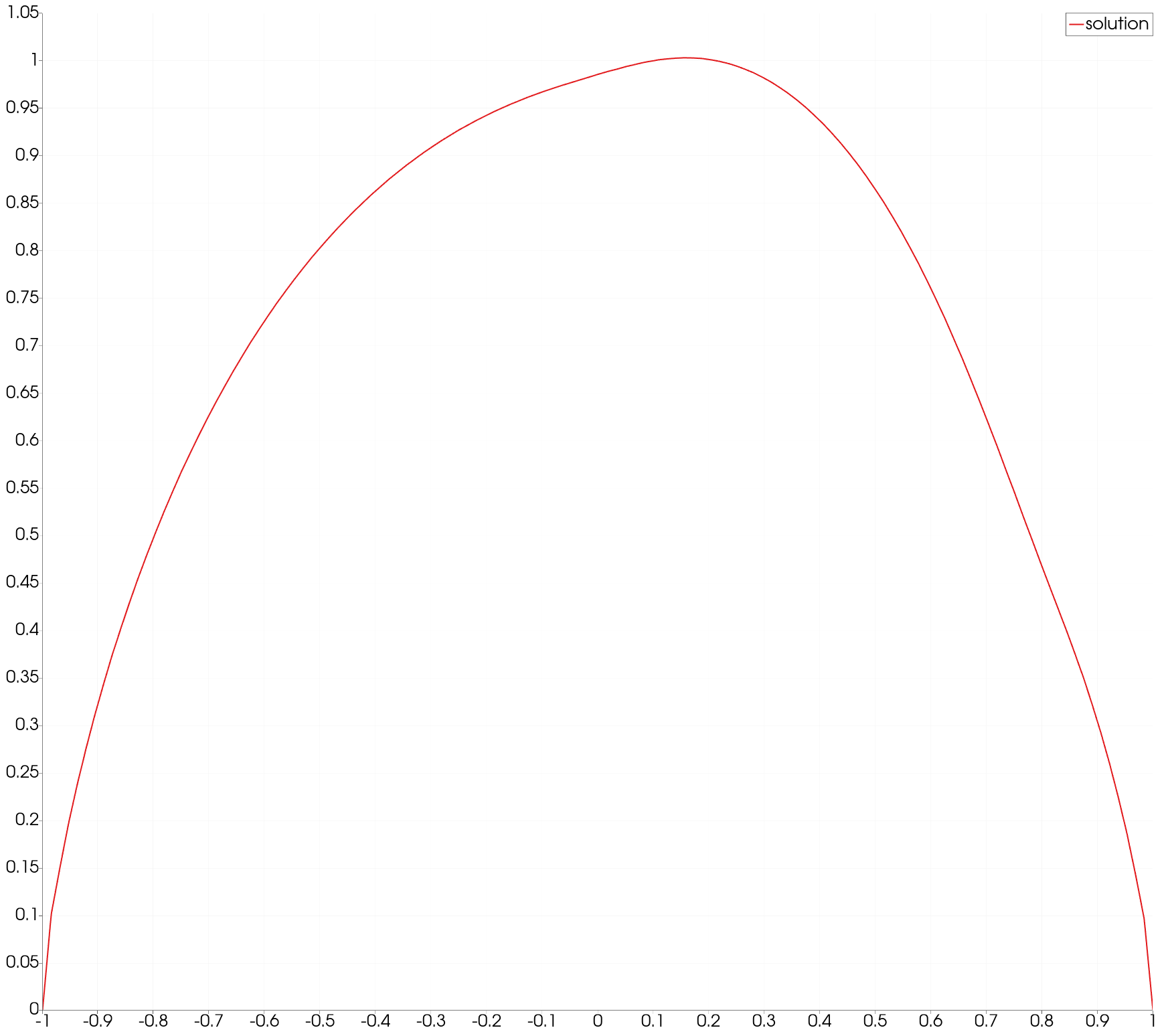}                   
    % \multicolumn{2}{c}{\includegraphics[width=0.49\textwidth]{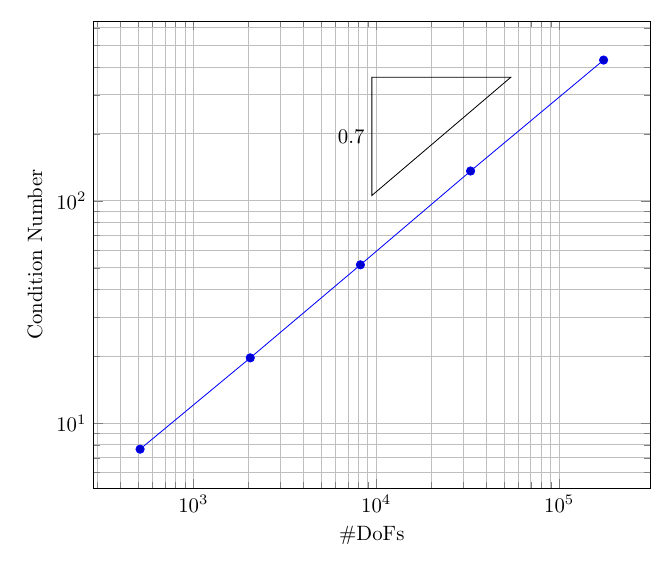}}
  \end{tabular}
  \caption{Variable-order comparison test: (left) $L^2$-error between $u$ and $U_j$ against degrees of freedoms $\#\text{DoFs}$ and the self-convergence of $U_j$ in $L^2$ and $L^\infty$ norms; (right) the finite element approximation $U_6$ ($\#\text{DoFs} = 16129$) along $x_1=-0.4$.}
  \label{f:comparsion-negative}
\end{figure}

\begin{figure}[H]
\begin{center}
\includegraphics[width=0.49\textwidth]{}
\end{center}	
\caption{Estimated condition number of the system matrix.}
\label{fig:condition}
\end{figure}

The first-order convergence rate $O(h)$ observed in the examples above is primarily due to the lack of regularity of the solution itself and is essentially the best that can be obtained for these solutions because of the singular derivatives at the boundary. One may wonder whether the proposed linear finite element discretization can produce second-order convergence when the solution has sufficient regularity.

To verify higher-order convergence, and since we cannot readily manufacture a solution analytically for variable order problems, we consider the following numerical alternative. We start from a given smooth solution and apply the forward operator discretized by a finite difference scheme to obtain a right-hand side. We then use this right-hand side in the finite element solver to recover the given solution.

\begin{figure}[ht]
  \centering
  \includegraphics[width=\textwidth]{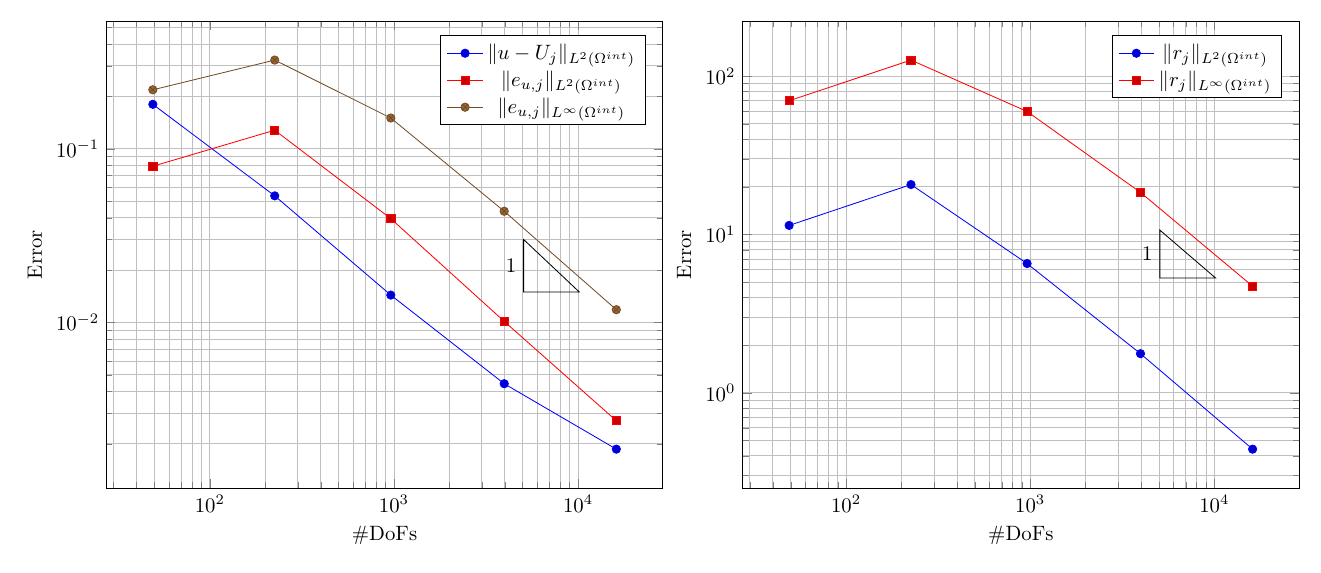}
  \caption{Variable-order test with smooth solutions: (left) $L^2$-error between $u$ and $U_j$ against the number of degrees of freedom $\#\text{DoFs}$ and the self-convergence of $U_j$ in $L^2$ and $L^\infty$ norms. (right) self-convergence of the right-hand side approximation using a finite difference method.}
  \label{f:comparsion-rhs}
\end{figure}

Specifically, we seek the solution $u$ with right-hand side data $F_j = I_j A_{\text{FD},j}u$, where $A_{\text{FD},j}u$ is the discrete right hand side data produced by the finite difference method \cite{alzahrani2022space} using the cartesian grid $\widetilde{\mathcal T}_j$ and $I_j$ is the Lagrange nodal interplant on $\mathbb V(\mathcal T_j)$. We fix $u$ to be the smooth function $u(x,y) = (1-x^2)^{15}(1-y^2)^{15}$ and set the same variable order function $s(x)$ in the previous variable order test.
\Cref{f:comparsion-rhs} shows the results of this test. The right panel of \Cref{f:comparsion-rhs} shows the convergence of the right hand side data as the mesh is refined. While in general only a first-order rate of convergence for $r_j:=F_{j+1}-F_j$ in $L^2(\Oi)$-norm is expected due to the singularity at the boundary, we obtain here second-order convergence since the solution is sufficiently smooth.
The left plot of Figure~\ref{f:comparsion-rhs} shows that the proposed finite element approximation can indeed obtain second-order convergence when solution regularity allows it.

%!TEX root = paper.tex

\section{Conclusions}
\label{sec:conclusions}

We presented an asymptotically optimal finite element method for modeling non-local fractional diffusion operators with spatial variation in fractional oder and material coefficients and in general geometries. In the finite-element formulation, triangle pairs in the spatial mesh are the basic units that generate elemental stiffness matrices to be assembled into a global stiffness matrix.  
We address the singularities in evaluating touching triangle pairs though specialized mapping and quadrature schemes designed to handle the variable-order case. 
The computational complexity due to the quadratic number of interacting triangle pairs is overcome through (i) the construction of the hierarchical matrix approximations for representing the effect of every interior node on all other ones, and (ii) a generalized variable-order fast multipole method that computes the cumulative effect of all triangles on every interior node. 
The overall complexity for building the complete discrete operator is optimal, $O(N)$, both in memory and in operations. We show the consistency of the method and the ability to control its accuracy through the number of quadrature points in the direct integrations and the degree of the polynomial interpolant of the kernel in the far field.
Numerical experiments verify the accuracy and complexity of the methods proposed. The techniques are general and 
should apply to a broader class of nonlocal kernels.

There are a number of directions we are pursing in further work.
First, the current work focused on the construction and application of the discrete operator. We are developing scalable preconditioners to allow the iterative solution of general fractional diffusion problems and are pursuing a geometric multilevel strategy that would allow end-to-end solutions in linear complexity. 
Second, the computations involved in building and applying the operator have high arithmetic intensity and substantial concurrency. We are developing performant implementations that are GPU-accelerated and can take advantage of multiple cores in a node and multiple distributed-memory nodes. This should produce dramatic improvements in absolute efficiency and make it quite practical to work with fractional operators in various application domains. 
We are also interested in extending the present finite element development to the case of anisotropic variability in fractional order and coefficients as well to three-dimensional geometry, a setting that has not yet been addressed sufficiently in the literature and where graded meshes will be particularly important.
Finally, we are working on incorporating the forward solver in the inner loop of an inverse problem that seeks to recover the spatial distribution of fractional order and coefficients. In this context, the flexibility and linear complexity in memory and operations of the forward solver are essential from a practical point of view. 
We plan to report on these developments elsewhere.

% \OK{
%     \begin{itemize}
%         \item Summarize developments
%         \item Summarize takeaways
%         \item Extensions we're considering:
%               \begin{itemize}
%                   \item \WLc{Other discretization approaches such as finite difference and partical methods}
%                   \item Parallel implementations and scalability.
%                   \item Graded meshes and higher order convergence.  [I have included some of the literature I could identify under the background directory.]
%                   \item Taking advantage of FEM flexibility and solver efficiency in inverse problems.  [Example based on previous work
%                                 is in background section].
%               \end{itemize}
%     \end{itemize}
% }

% \section*{Acknowledgments}
% The authors acknowledge the support of the Extreme Computing Research Center at KAUST.

\bibliographystyle{siam}
\bibliography{main}

\end{document}